\documentclass{birkau}
\usepackage{amsmath,amssymb,url}
\usepackage{hyperref}
\usepackage{enumitem}
\usepackage{latexsym}
\usepackage[english]{babel}
\usepackage{tikz}
\usepackage{bm}

\renewcommand{\labelenumi}{(\theenumi)}
\numberwithin{equation}{section}

\theoremstyle{plain}
\newtheorem{thm}{Theorem}[section]
\newtheorem{pro}[thm]{Proposition}
\newtheorem{lem}[thm]{Lemma}

\newtheorem{cor}[thm]{Corollary}
\theoremstyle{definition}
\newtheorem{exa}[thm]{Example}
\newtheorem{de}[thm]{Definition}

\newcommand{\N}{\mathbb{N}}

\newcommand{\Con}{\operatorname{Con}}

\newcommand{\Pol}{\operatorname{Pol}}
\newcommand{\RoundPol}{\operatorname{RoundPol}}

\newcommand{\bA}{\mathbf{A}}
\newcommand{\bG}{\mathbf{G}}
\newcommand{\oa}{\mathbf{a}}
\newcommand{\ob}{\mathbf{b}}
\newcommand{\oc}{\mathbf{c}}
\newcommand{\od}{\mathbf{d}}

\newcommand{\tup}[3]{(#1_{#2},\dots,#1_{#3})}
\newcommand{\Iibf}{\bm{\mathcal I}}

\DeclareMathOperator{\dep}{\mathord{d}}
\DeclareMathAlphabet\mathbfcal{OMS}{cmsy}{b}{n}

\DeclareMathOperator{\Cc}{\mathord{C}}

\begin{document}

\title{(Super)nilpotency and solvability in algebras with zero}
\corrauthor[I.\ \DJ ur\dj ev]{Ivana \DJ ur\dj ev}
\address{Mathematical Institute
of the Serbian Academy of Sciences and Arts \\
Kneza Mihaila 36\\
11000 Belgrade\\
Serbia}
\email{ivana.djurdjev@mi.sanu.ac.rs}

\author[N.\ Mudrinski]{Neboj\v{s}a Mudrinski}
\address{Department of Mathematics and Informatics\\
Faculty of Sciences\\
University of Novi Sad\\
21000 Novi Sad\\
Serbia}
\email{nmudrinski@dmi.uns.ac.rs}

\subjclass{08A30, 08A40}
\keywords{congruences, commutators, nilpotency, solvability}

\begin{abstract}
An algebra with zero is an algebra that contains an element 0, called zero, such that all fundamental operations have the property that the result is zero whenever at least one of the arguments takes the value 0. We introduce the notion of the higher commutator of ideals and Rees congruences for such algebras. It is shown that the commutator of Rees congruences is equal to the Rees congruence of the commutator of the corresponding ideals. We apply this result to provide characterisations of $n$-supernilpotency and $n$-nilpotency in algebras with zero, as well as $n$-solvability in groupoids with zero.
\end{abstract}

\maketitle

\section{Motivation and results}

In this manuscript, we study the properties of an algebra defined via the term condition commutator: $n$-supernilpotency, $n$-nilpotency and $n$-solvability for $n\in \mathbb{N}$. The term condition commutator has been defined as a binary operation $[\bullet,\bullet]$ on the congruence lattice of an algebra, first in congruence permutable varieties, and later in congruence modular varieties; see \cite{FreeMcKen}. It has been used to generalise the notions of abelianness, nilpotency and solvability, due to their importance in group theory (see Definitions \ref{d:Abel}, \ref{d:nilpotent} and \ref{d:n-sol}). In 2001, A. Bulatov generalised it by defining the commutator $[\bullet,\dots,\bullet]$ of any finite length; see \cite{Bulatov}. This concept enables the definition of the supernilpotent property $[1_A,\dots,1_A]=0_A$, for the full congruence $1_A$ and equality relation $0_A$ of an algebra $\bA$ (see Definition \ref{d:supernil}). Two main questions arise naturally. First, how are the mentioned properties connected? Second, can we find a characterisation of algebras with any of these properties? Considering the first question, from the basic properties of commutators, we easily obtain that abelianness implies $n$-nilpotency and $n$-supernilpotency, and that $n$-nilpotency implies $n$-solvability. However, the relationship between $n$-nilpotency and $n$-supernilpotency is not so straightforward and is the subject of a number of articles. $n$-supernilpotency implies $n$-nilpotency in congruence permutable varieties (as shown in \cite{AichMudr}), as well as in congruence modular varieties and in Taylor algebras (proved by Moorhead \cite{Moorhead2}) and in all finite algebras (proved by Kearnes and Szendrei \cite{KearSzen}). This implication is not always true, as Moore and Moorhead constructed an $n$-supernilpotent infinite algebra which is not even solvable (see \cite{MooreMoorhead}). Nevertheless, it is well known that $n$-supernilpotency and $n$-nilpotency coincide in groups and rings. Recently, it has been proved that $n$-nilpotency and $n$-supernilpotency also coincide in semigroups with zero \cite{RadMudr1} and in semirings with zero \cite{MilicaMudr}. Since such algebras are outside the congruence modular varieties and their generalisations, we may wonder: what is the property of an algebra that forces $n$-nilpotency and $n$-supernilpotency to coincide? Is the existence of the zero element such a condition? Our investigation is devoted to that question. For that purpose, we introduce algebras with zero in Definition \ref{awz} (the simplest examples are groupoids with zero). For them, we adopted the concept of Rees congruences from semigroups. In this manuscript, we study the commutator of these congruences and prove that $n$-supernilpotency implies $n$-nilpotency in algebras with zero; see Proposition \ref{supernilpnilp}. However, $n$-supernilpotency does not need to coincide with $n$-nilpotency, as shown in Example \ref{counterexample}. Most importantly, we provide characterisations of $n$-supernilpotent algebras with zero in Theorem \ref{p:supernil}, and of $n$-nilpotent algebras with zero in Theorem \ref{t:n-nilpot}, as well as a characterisation of $n$-solvable grupoids with zero, in Theorem \ref{t:nsolGroupoid}.

\section{On commutators in arbitrary algebras}

We begin by introducing the necessary notions, our primary goal being to describe the terms: the commutator of ideals, the commutator of congruences, $n$-nilpotency, $n$-supernilpotency, and $n$-solvability.
Recall the notation and definitions from \cite{McMc} for congruences, terms, and polynomial terms in universal algebras. We will omit writing the superscript
${\bA}$ for the polynomial $p^{\bA}(x_1,\dotsc,x_n)$ on $\bA$ corresponding to the polynomial term $p$, unless necessary. The set of all $k$-ary polynomials corresponding to a polynomial term containing all the $k$ arguments over an algebra $\bA$ will be denoted $\Pol^{*}_{k}(\bA)$. Furthermore, $\Pol^{*}(\bA)=\bigcup_{k\in\N}\Pol^{*}_{k}(\bA)$.


\begin{de}\label{Ideal}
 A non-empty set $I\subseteq A$ is an \textit{ideal} of an algebra $\bA=(A,F)$ if, for all $f\in F$ of arity $n\in\N$, we have $f\tup{x}{1}{n}\in I$, whenever there is an $i\in\{1,\dots,n\}$ such that $x_i\in I$.
\end{de}

The next proposition is an easy corollary of this definition.

\begin{pro}\label{idealsandpolynomials}
Let $I$ be an ideal of an algebra $\bA$, and let $k\in\N$ and $p\in\Pol^{*}_{k}(\bA)$. If $\tup{a}{1}{k}\in A^k$ contains at least one element of $I$, we have $p\tup{a}{1}{k}\in I$.
\end{pro}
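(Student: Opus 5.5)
Since $p\in\Pol^{*}_{k}(\bA)$, there is a polynomial term $t(x_1,\dots,x_k)$ that represents $p$ and in which every variable $x_1,\dots,x_k$ actually occurs. We argue by induction on the complexity of polynomial terms, proving the following stronger claim. Let $t$ be any polynomial term over $\bA$ (built from variables, constants from $A$, and fundamental operations), and let $\tup{a}{1}{k}\in A^k$ be an assignment with $a_i\in I$. If the variable $x_i$ occurs in $t$, then $t^{\bA}\tup{a}{1}{k}\in I$. The proposition follows at once: pick $i$ with $a_i\in I$; $x_i$ occurs in the chosen representing term because $p\in\Pol^{*}_{k}(\bA)$; hence $p\tup{a}{1}{k}\in I$.

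\emph{Base case.} The term $t$ is either a variable or a constant. A constant term contains no variable, so the hypothesis is vacuous. If $t$ is a variable containing $x_i$, then $t=x_i$, and $t^{\bA}\tup{a}{1}{k}=a_i\in I$.

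\emph{Inductive step.} Let $t=f(t_1,\dots,t_m)$ with $f\in F$ of arity $m$. If $x_i$ occurs in $t$, it occurs in some $t_j$. By the induction hypothesis, $b_j:=t_j^{\bA}\tup{a}{1}{k}\in I$. Writing $b_l:=t_l^{\bA}\tup{a}{1}{k}$ for every $l$, the tuple $(b_1,\dots,b_m)$ has an entry in $I$. Definition \ref{Ideal} therefore gives $t^{\bA}\tup{a}{1}{k}=f(b_1,\dots,b_m)\in I$.

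The only point needing care is that membership in $\Pol^{*}_{k}$ refers to a representing term in which all variables occur, rather than to the function $p$ alone. This is why the induction is carried out on terms, not on polynomial functions. There is also a degenerate case: if $m=0$, then $t$ is a constant symbol, which has no variables, so it is handled like the constant base case. Beyond this, I expect no real obstacle.
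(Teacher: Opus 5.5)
Your proof is correct and takes essentially the same route as the paper. The paper gives no written proof and only calls the statement an easy corollary of Definition \ref{Ideal}; your structural induction on a representing polynomial term in which every variable occurs is exactly that argument, and you correctly note that the induction has to run on terms rather than on polynomial functions.
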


 As in the case of semigroups and rings, ideals are domains of subalgebras, and the opposite is not true, in general. Next, we introduce a notion from semigroup theory (see \cite{Howie}). It is straightforward to check that $\rho_I=I\times I\cup\{(a,a)\,|\,a\in A\backslash I\}$ is a congruence of $\bA$, for an ideal $I$. Thus, we give the following definition.

\begin{de}\label{Reescongruence}
 Let $I$ be an ideal of an algebra $\bA$. Then, $\rho_I$ is the \textit{Rees congruence} on $\bA$ that corresponds to the ideal $I$.
\end{de}

With all the necessary preparations completed, we may define the higher commutator of ideals in an arbitrary algebra and examine its properties. Since this notion is trivial for algebras that do not contain operations of arity $2$ or higher (see Definition \ref{d:commutatorIdeals} and Example \ref{unary}), we consider only those algebras that have at least one fundamental operation of arity at least two. Unless otherwise stated, we assume this condition to hold for the remainder of this paper. It is a well-known result of Sierpi\'nski that such algebras have polynomials of arbitrary finite arity. 

\begin{de}\label{d:commutatorIdeals} Let $\bA$ be an algebra. For $k\in\N$ and ideals $I_{1},\ldots, I_{k}$ in $\bA$, we define the \textit{commutator of} $I_{1},\ldots I_{k}$ as
$$[I_{1},\ldots,I_{k}]=\{p(i_{1},\ldots,i_{k}): p\in\Pol^{*}_{k}(\bA), \text{ and } i_{1}\in I_{1},\ldots, i_{k}\in I_{k}\}.$$
\end{de}

\begin{pro}\label{p:comm_is_ideal} Let $\bA$ be an algebra. If $k\in\N$ and $I_{1},\ldots, I_{k}$ are ideals of $\bA$, then $[I_{1},\ldots,I_{k}]$ is an ideal of $\bA$.
\end{pro}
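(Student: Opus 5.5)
We check the two requirements of Definition \ref{Ideal} directly: non-emptiness, and closure under fundamental operations whenever one argument lies in the set. Write $J=[I_1,\ldots,I_k]$.

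\emph{Non-emptiness.} Each $I_j$ is non-empty, so we may pick $i_j\in I_j$. By the standing assumption, $\bA$ has a fundamental operation of arity at least two. By Sierpi\'nski's result it then has polynomials of every finite arity, and we may take one in $\Pol^{*}_{k}(\bA)$ that actually depends syntactically on all $k$ variables. Concretely, nest a binary-or-higher fundamental operation, filling the surplus places with constants or with repeated variables, so that every $x_j$ occurs. Then $p(i_1,\ldots,i_k)\in J$, and $J\neq\emptyset$.

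\emph{Closure.} Let $f\in F$ be $n$-ary and let $a_1,\ldots,a_n\in A$ with $a_m\in J$ for some $m$. Write $a_m=p(i_1,\ldots,i_k)$ with $p\in\Pol^{*}_{k}(\bA)$ and $i_j\in I_j$. Define the $k$-ary polynomial
$$q(x_1,\ldots,x_k)=f(a_1,\ldots,a_{m-1},p(x_1,\ldots,x_k),a_{m+1},\ldots,a_n),$$
in which the other $a_l$ are used as constants. The polynomial term for $q$ contains every variable $x_1,\ldots,x_k$, because the term of $p$ does. Hence $q\in\Pol^{*}_{k}(\bA)$. Moreover
$$f(a_1,\ldots,a_n)=q(i_1,\ldots,i_k)\in J.$$

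\emph{Main obstacle.} The only delicate point is the interpretation of $\Pol^{*}_k$ as polynomials given by a term that contains all the variables. The closure step works because substituting such a term into a fundamental operation, with constants in the other places, again yields a term containing all the variables. The remaining work is to check carefully that a polynomial in $\Pol^{*}_k(\bA)$ exists at all, which is where the assumption of an operation of arity at least two is essential. Note that the ideal property of the $I_j$ (Proposition \ref{idealsandpolynomials}) is not needed here; it only gives the additional fact that $J\subseteq I_1\cap\cdots\cap I_k$.
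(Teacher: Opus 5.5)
Your proposal is correct and follows the paper's proof essentially verbatim. Non-emptiness comes from choosing $i_j\in I_j$ and any $p\in\Pol^{*}_{k}(\bA)$. Closure comes from observing that $f(a_1,\ldots,a_{m-1},p(x_1,\ldots,x_k),a_{m+1},\ldots,a_n)$ is again in $\Pol^{*}_{k}(\bA)$. You also explicitly justify that $\Pol^{*}_{k}(\bA)\neq\emptyset$ using the standing assumption of an operation of arity at least two, which the paper leaves implicit.
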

\begin{proof}
Since $I_{i}\neq \emptyset$ for $i=1,\ldots,k$, for elements $i_1\in I_1,\dots,i_k\in I_k$ and an arbitrary polynomial $p\tup{x}{1}{k}\in\Pol^{*}_{k}(\bA)$, we have $p\tup{i}{1}{k}\in[I_{1},\ldots,I_{k}]$. Therefore, $[I_{1},\ldots,I_{k}]\neq\emptyset$. Now, let $i\in[I_{1},\ldots,I_{k}]$, and let $1\leq j\leq k$. Further, let $f$ be a fundamental operation of arity $n\in\N$, and let $a_1,\dots,a_{j-1},a_{j+1},\dots,a_n\in A$. From Definition \ref{d:commutatorIdeals}, we have $i=p(i_{1},\ldots,i_{k})$, for some $p\in\Pol^{*}_{k}(\bA)$ and elements $i_{t}\in I_{t}$ for $t\in\{1,\dots,k\}$.
Since $f(a_1,\dots,a_{j-1},p(x_{1},\ldots,x_{k}),a_{j+1},\dots,a_n)\in\Pol^{*}_{k}(\bA)$, by Definition \ref{d:commutatorIdeals} we may conclude that
$$
f(a_1,\dots,a_{j-1},i,a_{j+1},\dots,a_n)=f(a_1,\dots,a_{j-1},p(i_{1},\ldots,i_{k}),a_{j+1},\dots,a_n)
$$
is an element of $[I_{1},\ldots,I_{k}]$. Hence, we have proved that $[I_{1},\ldots,I_{k}]$ is an ideal of $\bA$, by Definition \ref{Ideal}.
\end{proof}

The following statements are true in any algebra, as proved in \cite[Section 2, Proposition 1.3]{AichMudr}.

\begin{pro}\label{HC} Let $\bA$ be an algebra and let $k\in\N$. For Rees congruences $\rho_{I_{0}},\rho_{I_{1}},\ldots,\rho_{I_{k}},\rho_{J_{1}},\ldots,\rho_{J_{k}}$ of $\bA$, we have
\begin{description}
    \item[(HC1)] $[\rho_{I_{1}},\ldots,\rho_{I_{k}}]\leq \rho_{I_{1}}\wedge\cdots\wedge \rho_{I_{k}}$;
    \item[(HC2)] if $\rho_{J_{1}}\leq \rho_{I_{1}},\ldots, \rho_{J_{k}}\leq \rho_{I_{k}}$, then $[\rho_{J_{1}},\ldots,\rho_{J_{k}}]\leq [\rho_{I_{1}},\ldots, \rho_{I_{k}}]$;
    \item[(HC3)] $[\rho_{I_{0}},\rho_{I_{1}},\ldots,\rho_{I_{k}}]\leq [\rho_{I_{1}},\ldots,\rho_{I_{k}}]$.
\end{description}
\end{pro}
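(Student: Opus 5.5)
These three properties are the standard properties of Bulatov's higher commutator, which hold for arbitrary congruences of an arbitrary algebra. Rees congruences are congruences, so I would derive (HC1)–(HC3) directly from the definition of the commutator by the term condition. Recall that $[\alpha_1,\dots,\alpha_k]$ is the least congruence $\delta$ such that $C(\alpha_1,\dots,\alpha_k;\delta)$ holds. Here $C(\alpha_1,\dots,\alpha_k;\delta)$ means the following: for every polynomial $p$ and all tuples $\oa_i\,\alpha_i\,\ob_i$ ($i\le k$), if $p$ evaluated with $\ob_k$ in the last block is $\delta$-related whenever the remaining blocks are not all from the $\ob$'s, then the two values with $\oa_k$ and $\ob_k$ in the last block (all other blocks equal to $\ob$'s) are also $\delta$-related. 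Each of (HC1)–(HC3) then reduces to showing that a certain congruence satisfies the relevant centrality condition. Minimality then gives the inequality.

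For (HC1), I would show that $C(\rho_{I_1},\dots,\rho_{I_k};\rho_{I_j})$ holds for each $j$. Once this holds for every $j$, minimality and the fact that the centralising congruences are closed under intersection give the meet bound. The case $j<k$ is the substantive one. Choose the premise instance in which block $j$ takes $\oa_j$ and all other blocks take $\ob$'s. This gives $p(\ob_1,\dots,\oa_j,\dots,\ob_{k-1},\ob_k)\,\rho_{I_j}\,p(\ob_1,\dots,\oa_j,\dots,\ob_{k-1},\oa_k)$. Since $\oa_j\,\rho_{I_j}\,\ob_j$, compatibility yields $p(\dots,\oa_j,\dots,\ob_k)\,\rho_{I_j}\,p(\dots,\ob_j,\dots,\ob_k)$, and similarly with $\oa_k$ in the last block. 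Chaining these relations gives the conclusion modulo $\rho_{I_j}$. For $j=k$, the conclusion is immediate from compatibility, since $\oa_k\,\rho_{I_k}\,\ob_k$.

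For (HC2), monotonicity is immediate once I note that the term condition $C(\beta_1,\dots,\beta_k;\delta)$ is antitone in each $\beta_i$. If the pairs range over smaller congruences, there are fewer instances to check. Hence $C(\rho_{I_1},\dots,\rho_{I_k};\delta)$ implies $C(\rho_{J_1},\dots,\rho_{J_k};\delta)$ for $\delta=[\rho_{I_1},\dots,\rho_{I_k}]$, and minimality gives the claim.

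For (HC3), I would show that $\delta=[\rho_{I_1},\dots,\rho_{I_k}]$ satisfies $C(\rho_{I_0},\rho_{I_1},\dots,\rho_{I_k};\delta)$. Given a $(k{+}1)$-block instance with polynomial $p$ and pairs $\oa_0\,\rho_{I_0}\,\ob_0,\dots$, I would fix block $0$ at $\ob_0$ and view $p(\ob_0,\mathbf{x}_1,\dots,\mathbf{x}_k)$ as a $k$-block polynomial. The premises of the $(k{+}1)$-condition in which block $0$ is $\ob_0$ are exactly the premises of the $k$-condition for this polynomial. The conclusion of the $(k{+}1)$-condition is exactly its $k$-conclusion, because in the conclusion block $0$ sits at its $\ob$ value. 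So the $k$-ary term condition gives the required relation.

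The main obstacle is the indexing conventions: one must make sure the definition of the term condition adopted in \cite{AichMudr} really places the "conclusion" with all non-final blocks at $\ob$. Otherwise (HC3) must be proved by setting block $0$ to whichever value the convention requires. I would simply cite \cite[Section 2, Proposition 1.3]{AichMudr} for these general facts, noting that Rees congruences are a special case.
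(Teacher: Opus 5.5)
Your proposal is correct and takes the same approach as the paper. The paper gives no argument of its own for (HC1)--(HC3): it cites \cite[Section 2, Proposition 1.3]{AichMudr}, where these are proved for arbitrary congruences, and specialises to Rees congruences. Your term-condition arguments are the standard ones behind that citation. The convention you were worried about is exactly that of Definition~\ref{d:C}: premises over all choices except $(\ob_1,\dots,\ob_{k-1})$, and the conclusion with all non-final blocks at $\ob$.

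One loose end, shared with the paper, is $k=1$. Definition~\ref{d:commCong} only covers $k\ge 2$, so (HC1) and (HC3) need a convention such as $[\alpha]=\alpha$. Under that convention (HC1) is trivial and (HC3) reduces to (HC1) for the binary commutator.
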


In our next step, we define the commutator of congruences, as in \cite{Bulatov}. This is followed by the definitions of abelianness, $n$-nilpotency, $n$-supernilpotency and $n$-solvability for an arbitrary algebra. We aim to find the connection between the two notions of commutators and then describe these properties via that connection.

\begin{de}\label{d:C} Let $\bA$ be an algebra, $k\in\N$, and $\alpha_{1},\ldots,\alpha_{k},\beta, \delta\in\Con(\bA)$. We say that congruences $\alpha_{1},\ldots,\alpha_{k}$ on $\bA$ \textit{centralize} $\beta$ \textit{modulo} $\delta$ (denoted by $\Cc(\alpha_{1},\ldots,\alpha_{k},\beta;\delta)$), if for each polynomial $p\in\Pol(\bA)$ of arity $n_{1}+\cdots +n_{k}+m$ where $n_{1},\ldots,n_{k},m\in \N$, and any $\mathbf{a}_{1},\mathbf{b}_{1}\in A^{n_{1}},\ldots,\mathbf{a}_{k},\mathbf{b}_{k}\in A^{n_{k}}, \mathbf{c},\mathbf{d}\in A^{m}$ which satisfy $\oc\, \beta\, \od$ and $\oa_j\, \alpha_{j}\, \ob_{j}$ for all $j\in\{1,\ldots,k\}$,  the following relations
    $$
    \begin{aligned}
        p(\mathbf{a}_{1},\mathbf{a}_{2},\ldots,\mathbf{a}_{k-2},\mathbf{a}_{k-1},\mathbf{a}_{k},\mathbf{c})\ &\delta\  p(\mathbf{a}_{1},\mathbf{a}_{2},\ldots,\mathbf{a}_{k-2},\mathbf{a}_{k-1},\mathbf{a}_{k},\mathbf{d}),\\
        p(\mathbf{a}_{1},\mathbf{a}_{2},\ldots,\mathbf{a}_{k-2},\mathbf{a}_{k-1},\mathbf{b}_{k},\mathbf{c})\ &\delta\  p(\mathbf{a}_{1},\mathbf{a}_{2},\ldots,\mathbf{a}_{k-2},\mathbf{a
        }_{k-1},\mathbf{b}_{k},\mathbf{d}),\\
        &\cdots\\
        p(\mathbf{b}_{1},\mathbf{a}_{2},\ldots,\mathbf{a}_{k-2},\mathbf{a}_{k-1},\mathbf{a}_{k},\mathbf{c})\ &\delta\  p(\mathbf{b}_{1},\mathbf{a}_{2},\ldots,\mathbf{a}_{k-2},\mathbf{a}_{k-1},\mathbf{a}_{k},\mathbf{d}),\\
        p(\mathbf{b}_{1},\mathbf{a}_{2},\ldots,\mathbf{a}_{k-2},\mathbf{a}_{k-1},\mathbf{b}_{k},\mathbf{c})\ &\delta\  p(\mathbf{b}_{1},\mathbf{a}_{2},\ldots,\mathbf{a}_{k-2},\mathbf{a}_{k-1},\mathbf{b}_{k},\mathbf{d}),\\
        &\cdots\\
        p(\mathbf{b}_{1},\mathbf{b}_{2},\ldots,\mathbf{b}_{k-2},\mathbf{b}_{k-1},\mathbf{a}_{k},\mathbf{c})\ &\delta\  p(\mathbf{b}_{1},\mathbf{b}_{2},\ldots,\mathbf{b}_{k-2},\mathbf{b}_{k-1},\mathbf{a}_{k},\mathbf{d})
    \end{aligned}
    $$
    together imply $p(\mathbf{b}_{1},\ldots,\mathbf{b}_{k},\mathbf{c})\ \delta\  p(\mathbf{b}_{1},\ldots,\mathbf{b}_{k},\mathbf{d})$.

\end{de}

\begin{de}\label{d:commCong} Let $\bA$ be an algebra and let $k\geq 2$. For $\alpha_{1},\ldots,\alpha_{k}\in\Con(\bA)$, the \textit{$k$-ary commutator of} $\alpha_{1},\ldots,\alpha_{k}$ is the smallest congruence $\delta$ on $\bA$ such that $\Cc(\alpha_{1},\ldots,\alpha_{k};\delta)$. This congruence is denoted by $[\alpha_{1},\ldots,\alpha_{k}]$.
\end{de}

\begin{de}\label{d:Abel}
An algebra $\bA$ is abelian if $[1_{A},1_{A}]=0_{A}$.
\end{de}

\begin{exa}\label{unary}
Let $\bA=(A,F)$ be an algebra such that all operations in $F$ are at most unary. Here, we have $\Pol^{*}(\bA)=\Pol^{*}_{1}(\bA)$. From Definition \ref{d:C} (for $k=1$) we can easily see that for any congruence $\alpha$ on $\bA$ we have $\Cc(1_A,1_A;\alpha)$. Therefore, $[1_A,1_A]=0_A$ by Definition \ref{d:commCong} (for $k=2$). Using the properties (HC1), (HC2) and (HC3) from Proposition \ref{HC}, we may conclude that this algebra also satisfies the next three properties.
\end{exa}

For the following three definitions, let $n\in\mathbb{N}$.

\begin{de}\label{d:nilpotent} An algebra $\bA$ is \textit{$n$-nilpotent}, if $\underbrace{[1_{A},\ldots[}_{n}1_{A},1_{A}]\ldots]=0_{A}$.
\end{de}

\begin{de}\label{d:supernil} An algebra $\bA$ is \textit{$n$-supernilpotent}, if $[\ \underbrace{1_{A},\ldots,1_{A}}_{n+1}\ ]=0_{A}$.
\end{de}

For a congruence $\alpha$ on an algebra $\bA$, let $[\alpha]^{(1)}=[\alpha,\alpha]$ and $[\alpha]^{(n+1)}=[[\alpha]^{(n)},[\alpha]^{(n)}]$, for $n\in\N$.
\begin{de}\label{d:n-sol} An algebra $\bA$ is \textit{$n$-solvable} if $[1_{A}]^{(n)}=0_{A}$.
\end{de}

 The proof of the next lemma is analogous to the proof of \cite[Lemma 2.8]{RadMudr1}. Note that the proof uses the term semantic dependence:
 \begin{de}\label{d:semDep} We say that, for $n\in \mathbb{N}$ and an algebra $\bA$, an $n$-ary polynomial $p$ \textit{semantically depends} on its i-th argument if there exist $a,b\in A$ and $(x_{1},\ldots, x_{n})\in A^{n}$ such that
$$p(x_{1},\ldots,x_{i-1},a,x_{i+1},\ldots,x_{n})\neq p(x_{1},\ldots,x_{i-1},b,x_{i+1},\ldots,x_{n}).$$
 \end{de}
 It is easily seen that any polynomial which semantically depends on all its arguments belongs to the set $\Pol^{*}_{n}(A)$. For this reason, the same proof works in our case as well.

\begin{lem}\label{Lema1}
 Let $k\geq 2$ and let $\rho_{I_1},\dotsc,\rho_{I_k}$ be Rees congruences on an arbitrary algebra $\bA=(A,F)$. Then the condition $\Cc(\rho_{I_1},\dots,\rho_{I_k};\eta)$ is true for all congruences $\eta$ such that $\rho_{[I_1,\dots,I_k]}\leq\eta$.
\end{lem}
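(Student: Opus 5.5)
We need to show $\Cc(\rho_{I_1},\dots,\rho_{I_k};\eta)$ in the sense of Definition~\ref{d:C} with $k-1$ congruences $\rho_{I_1},\dots,\rho_{I_{k-1}}$ and $\beta=\rho_{I_k}$. Fix $p\in\Pol(\bA)$ and tuples $\oa_j\,\rho_{I_j}\,\ob_j$ for $j<k$, and $\oc\,\rho_{I_k}\,\od$. Assume all hypothesis rows hold modulo $\eta$. We must prove
\[
p(\ob_1,\dots,\ob_{k-1},\oc)\ \eta\ p(\ob_1,\dots,\ob_{k-1},\od).
\]
The plan is to show that either this pair is trivially related or both sides lie in $[I_1,\dots,I_k]$. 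In the latter case they are $\rho_{[I_1,\dots,I_k]}$-related, hence $\eta$-related, because $\rho_{[I_1,\dots,I_k]}\le\eta$. The ideal $[I_1,\dots,I_k]$ is available by Proposition~\ref{p:comm_is_ideal}.

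\textbf{Step 1: reduction to coordinates that change.} A pair $(x,y)$ is in a Rees congruence $\rho_I$ iff $x=y$ or $x,y\in I$. So in each tuple, a coordinate where $\oa_j$ and $\ob_j$ differ has both entries in $I_j$, and similarly for $\oc,\od$. Substitute the common value into $p$ at every coordinate where the tuples agree, and identify nothing else. This gives a polynomial $q$ in the remaining variables, where the $j$-th block has entries in $I_j$ in both choices. If some block becomes empty, say block $j<k$ with $\oa_j=\ob_j$, then the target conclusion is literally one of the hypotheses. If the last block is empty, i.e.\ $\oc=\od$, the conclusion is trivial. So assume every block is nonempty.

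\textbf{Step 2: dependence.} If $q$ does not semantically depend (Definition~\ref{d:semDep}) on some variable, we may fix that variable arbitrarily without changing any of the values involved. Repeat this, as in \cite[Lemma 2.8]{RadMudr1}. If this empties a block, we are again in the trivial case of Step 1, since the corresponding values coincide. Otherwise we obtain a polynomial $q'$ that depends semantically on all its arguments, so $q'$ belongs to $\Pol^{*}(\bA)$, as remarked before the lemma. Every block of its variables is nonempty and is fed elements of $I_j$.

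\textbf{Step 3: membership in the commutator.} The main point is to get $q'(\mathbf{u}_1,\dots,\mathbf{u}_k)\in[I_1,\dots,I_k]$ when each block $\mathbf{u}_j$ consists of elements of $I_j$, since Definition~\ref{d:commutatorIdeals} uses exactly one argument per ideal. To handle this, pick one variable $x_j$ in each block. Plug in the remaining block entries and view $q'$ as a $k$-ary polynomial in $x_1,\dots,x_k$.

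The obstacle is that substituting constants can destroy the property that the polynomial "contains all arguments". This is where the care lies. Containing a variable syntactically is preserved by substituting the other variables with constants, because the term is unchanged as a term. So $r(x_1,\dots,x_k):=q'(x_1,\mathbf{u}_1',\dots,x_k,\mathbf{u}_k')$ is a polynomial term still containing each $x_j$, i.e.\ $r\in\Pol^{*}_k(\bA)$. Hence $q'(\mathbf{u}_1,\dots,\mathbf{u}_k)=r(u_{1},\dots,u_{k})\in[I_1,\dots,I_k]$.

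Applying this to both $p(\ob_1,\dots,\ob_{k-1},\oc)$ and $p(\ob_1,\dots,\ob_{k-1},\od)$ puts both in $[I_1,\dots,I_k]$. Therefore they are related by $\rho_{[I_1,\dots,I_k]}\le\eta$, which finishes the proof.

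Since syntactic containment suffices here, Step 2 may even be unnecessary: one only needs each block nonempty after the reduction of Step 1.
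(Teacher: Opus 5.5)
Your argument is correct and is essentially the one the paper intends by deferring to \cite[Lemma 2.8]{RadMudr1}: you discard coordinates on which the tuples agree or on which the polynomial does not semantically depend, note that an emptied block makes the conclusion one of the hypotheses (or trivial), and otherwise use that a polynomial depending on all its arguments lies in $\Pol^{*}$, so both sides of the conclusion lie in $[I_1,\dots,I_k]$. Your closing remark is wrong, though: a block can be nonempty after Step 1 while none of its variables occurs in the term of $p$ (e.g.\ $p(x,y)=f(e,y)$ for a binary $f\in F$ and a constant $e\in A$), and then $r\notin\Pol^{*}_k(\bA)$, so Step 2 is genuinely needed, or at least its syntactic analogue of discarding variables that do not occur in the term.
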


\section{Algebras with zero}

In an algebra with a zero element, the converse of Lemma \ref{Lema1} also holds. Thus, we may prove Proposition \ref{p:THEthm} and Corollary \ref{c:isomAlg}, which will give us a way to examine $n$-nilpotency, $n$-supernilpotency, and $n$-solvability in terms of ideals.

\begin{de}\label{awz}
A universal algebra $\bA=(A,F)$ is an \emph{algebra with zero} if there is a constant $0\in F$ such that for any operational symbol $f\in F$ of arity $n\in\N$, we have $f^{\bA}\tup{x}{1}{n}=0$ whenever there exists an $i\in\{1,\dots,n\}$ such that $x_i=0$.
\end{de}

Obvious examples are semigroups with zero and groupoids with zero. These topics are investigated in \cite{RadMudr1} and in Section \ref{grupoids}, but we continue here with the general case. The next statement is a corollary of Proposition \ref{idealsandpolynomials}, since $\{0\}$ is an ideal in any algebra with zero.

\begin{pro}\label{polynomialzero}
Let $\bA$ be an algebra with zero $0$. Then, for each polynomial $p\in\Pol^{*}_{n}(\bA)$ where $n\in\N$, we have $p^{\bA}\tup{x}{1}{n}=0$ whenever there exists an $i\in\{1,\dots,n\}$ such that $x_i=0$.
\end{pro}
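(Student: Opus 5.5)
The plan is to deduce the statement directly from Proposition \ref{idealsandpolynomials}, applied to the ideal $I=\{0\}$. The only work is to check that $\{0\}$ really is an ideal of $\bA$ in the sense of Definition \ref{Ideal}.

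First, $\{0\}$ is non-empty, since $0$ is a constant in $F$, so $0\in A$. Next, let $f\in F$ have arity $n\in\N$ and let $x_1,\dots,x_n\in A$ with $x_i\in\{0\}$ for some $i$, that is, $x_i=0$. By Definition \ref{awz}, $f(x_1,\dots,x_n)=0\in\{0\}$. Hence $\{0\}$ is an ideal. The constant $0$ itself, as a nullary operation, imposes no condition, because there is no argument position $i$ to check.

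Now take $p\in\Pol^{*}_{n}(\bA)$ and a tuple $\tup{x}{1}{n}$ with some $x_i=0$. This tuple contains an element of the ideal $\{0\}$, so Proposition \ref{idealsandpolynomials} gives $p\tup{x}{1}{n}\in\{0\}$, that is, $p\tup{x}{1}{n}=0$.

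The only point that needs care is why the hypothesis $p\in\Pol^{*}_{n}(\bA)$ is essential: every argument must actually occur in the polynomial term. Otherwise the $i$-th variable could be absent, or $p$ could be a constant polynomial with a nonzero value, and the conclusion would fail. This requirement is already built into Proposition \ref{idealsandpolynomials}, whose proof is an induction on the polynomial term, so I do not expect any real obstacle.
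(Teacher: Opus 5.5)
Your proposal is correct and follows the paper's own argument: the paper derives this proposition as a corollary of Proposition \ref{idealsandpolynomials}, using that $\{0\}$ is an ideal in any algebra with zero. Your explicit check that $\{0\}$ satisfies Definition \ref{Ideal}, via Definition \ref{awz}, fills in the one step the paper leaves implicit.
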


In order to make a distinction between the zero polynomial $p(x)=0$ and a polynomial which evaluates to zero for every possible input in an algebra, we introduce a concept from Ring theory.

\begin{de}\label{vanishing} Let $\bA$ be an algebra with zero $0$. We say that a polynomial $p$ of arity $n\in\mathbb{N}$ is a \textit{vanishing polynomial} in $\bA$, if $p(a_{1},\ldots,a_{n})=0$ for any $n$-tuple $(a_{1},\ldots,a_{n})\in A^{n}$.
\end{de}

Note that, in a non-trivial algebra with zero, a non-vanishing polynomial semantically depends (see Definition \ref{d:semDep}) on its $i$-th argument if and only if it appears in the corresponding polynomial term. It follows that, in an algebra $\bA$ with zero, for $n\in\N$, the set $\Pol^{*}_{n}(\bA)$ contains all non-vanishing $n$-ary polynomials which semantically depend on all their arguments, as well as the vanishing $n$-ary polynomials.

It is obvious that in algebras with zero, the zero element belongs to any ideal. Therefore, the simplest examples of ideals are the underlying set $A$ and the zero ideal $\{0\}$. Due to the properties of the zero element discussed above, we can prove Lemma \ref{Lema2}, which is the converse of Lemma \ref{Lema1}. From these two lemmas, in Proposition \ref{p:THEthm}, we obtain the connection between the commutator of Rees congruences and the Rees congruence of the commutator of the corresponding ideals. The proof of Lemma \ref{Lema2} is analogous to the proof of \cite[Lemma 3.1]{RadMudr1}, using Proposition \ref{polynomialzero}.

\begin{lem}\label{Lema2} Let $k\geq 2$ and let $I_{1},\ldots, I_{k}$ be ideals in an algebra $\bA$ with zero. For the corresponding Rees congruences $\rho_{I_{1}},\ldots \rho_{I_{k}}$, and $\theta\in\Con(\bA)$ such that $\Cc(\rho_{I_{1}},\ldots, \rho_{I_{k}};\theta)$, we have $\rho_{[I_{1},\ldots,I_{k}]}\subseteq \theta$.
\end{lem}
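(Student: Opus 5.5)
Since $\rho_{[I_1,\dots,I_k]}=[I_1,\dots,I_k]^2\cup 0_A$ and $\theta$ is an equivalence relation, it suffices to show that every element $u\in[I_1,\dots,I_k]$ satisfies $u\,\theta\,0$. Indeed, then for any $u,v\in[I_1,\dots,I_k]$ we get $u\,\theta\,0\,\theta\,v$, and the diagonal part lies in $\theta$ trivially.

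So fix $u=p(i_1,\dots,i_k)$ with $p\in\Pol^{*}_{k}(\bA)$ and $i_j\in I_j$. We apply the centrality condition $\Cc(\rho_{I_1},\dots,\rho_{I_k};\theta)$ of Definition \ref{d:C}, with the last block of $k$ variables playing the role of $\oc,\od$. Set $n_1=\dots=n_{k-1}=1$ and $m=1$, and let the polynomial be $q(x_1,\dots,x_{k-1},y)=p(x_1,\dots,x_{k-1},y)$. Choose the tuples as follows:
\[
a_j=0,\quad b_j=i_j \ (j<k),\qquad c=0,\quad d=i_k .
\]
These choices are admissible:
\begin{itemize}
\item $0,i_j\in I_j$, so $(0,i_j)\in I_j\times I_j\subseteq\rho_{I_j}$;
\item similarly $(0,i_k)\in\rho_{I_k}$;
\item the zero belongs to every ideal, as noted before the lemma.
\end{itemize}
Here the congruence $\rho_{I_k}$ plays the role of $\beta$, so we are using the form $\Cc(\rho_{I_1},\dots,\rho_{I_{k-1}},\rho_{I_k};\theta)$ exactly as in Definition \ref{d:C} with $k-1$ centralizing congruences.

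Now check the hypotheses. Each hypothesis of Definition \ref{d:C} compares $q(\mathbf{e},0)$ with $q(\mathbf{e},i_k)$, where $\mathbf{e}$ is a tuple with at least one coordinate equal to $a_j=0$; every combination except the all-$\ob$ one contains such a coordinate. Since $p\in\Pol^{*}_{k}(\bA)$, Proposition \ref{polynomialzero} gives that both sides equal $0$, so each hypothesis holds by reflexivity of $\theta$. The conclusion then yields
\[
p(i_1,\dots,i_{k-1},0)\ \theta\ p(i_1,\dots,i_{k-1},i_k).
\]
The left side is $0$, again by Proposition \ref{polynomialzero}, so $0\,\theta\,u$, as required.

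The main obstacle is purely bookkeeping: matching the indexing of Definition \ref{d:C}, where $k$ congruences centralize $\beta$, with the $k$-ary commutator of Definition \ref{d:commCong}, where $\Cc(\alpha_1,\dots,\alpha_k;\delta)$ means the first $k-1$ congruences centralize $\alpha_k$. Once that convention is fixed as above, the argument goes through. The degenerate case $k-1\ge1$ is guaranteed by $k\ge2$.
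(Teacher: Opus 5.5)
Your proof is correct and follows essentially the same route as the paper. The paper gives no details of its own and defers to the analogous proof of Lemma 3.1 in \cite{RadMudr1}, using Proposition~\ref{polynomialzero}. That argument, like yours, reduces the claim to $u\,\theta\,0$ for each $u=p(i_1,\dots,i_k)\in[I_1,\dots,I_k]$ and then applies the term condition with $0$ in every $\mathbf{a}$-position and as $\mathbf{c}$: every hypothesis collapses to $0\,\theta\,0$, and the conclusion gives $0\,\theta\,u$.
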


Proposition \ref{p:THEthm} follows as a direct consequence of Lemmas \ref{Lema1} and \ref{Lema2}.

\begin{pro}\label{p:THEthm} Let $k\geq 2$ and let $I_{1},\ldots,I_{k}$ be ideals of an algebra with zero $\bA$. For the corresponding Rees congruences $\rho_{I_{1}},\ldots,\rho_{I_{k}}$ we have
\begin{equation*}
[\rho_{I_{1}},\ldots,\rho_{I_{k}}]=\rho_{[I_{1},\ldots,I_{k}]}.
\end{equation*}
\end{pro}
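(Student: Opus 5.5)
The equality will follow by comparing both sides against the defining minimality property of the commutator in Definition \ref{d:commCong}. Recall that $[\rho_{I_{1}},\ldots,\rho_{I_{k}}]$ is the least congruence $\delta$ with $\Cc(\rho_{I_{1}},\ldots,\rho_{I_{k}};\delta)$. First, I will check that $\rho_{[I_{1},\ldots,I_{k}]}$ is a legitimate candidate. By Proposition \ref{p:comm_is_ideal}, $[I_{1},\ldots,I_{k}]$ is an ideal of $\bA$. Hence its Rees congruence $\rho_{[I_{1},\ldots,I_{k}]}$ is defined and is a congruence of $\bA$, as noted before Definition \ref{Reescongruence}.

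For the inclusion $[\rho_{I_{1}},\ldots,\rho_{I_{k}}]\leq\rho_{[I_{1},\ldots,I_{k}]}$, I apply Lemma \ref{Lema1} with $\eta=\rho_{[I_{1},\ldots,I_{k}]}$; the hypothesis $\rho_{[I_1,\dots,I_k]}\leq\eta$ holds trivially. This gives $\Cc(\rho_{I_{1}},\ldots,\rho_{I_{k}};\rho_{[I_{1},\ldots,I_{k}]})$. Minimality of the commutator then yields the desired inequality. This direction uses nothing about the zero element.

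For the reverse inclusion, I take $\theta=[\rho_{I_{1}},\ldots,\rho_{I_{k}}]$. Since $\theta$ is a congruence satisfying $\Cc(\rho_{I_{1}},\ldots,\rho_{I_{k}};\theta)$ by its definition, Lemma \ref{Lema2} applies, using that $\bA$ is an algebra with zero. It gives $\rho_{[I_{1},\ldots,I_{k}]}\subseteq\theta$. Combining the two inclusions gives the equality.

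The only point requiring care is that the smallest centralizing congruence actually exists and itself satisfies the centrality condition, so that Lemma \ref{Lema2} can be applied to $\theta$. If one prefers to avoid relying on this, the reverse direction can be phrased instead as follows: every $\delta$ with $\Cc(\rho_{I_{1}},\ldots,\rho_{I_{k}};\delta)$ contains $\rho_{[I_{1},\ldots,I_{k}]}$ by Lemma \ref{Lema2}. Then $\rho_{[I_{1},\ldots,I_{k}]}$ is a lower bound of the set of all such $\delta$, and by the first step it is also a member of that set. It is therefore the least member, which is exactly $[\rho_{I_{1}},\ldots,\rho_{I_{k}}]$. The real mathematical content lies in Lemmas \ref{Lema1} and \ref{Lema2}, which we may assume, so this combination step presents no genuine obstacle.
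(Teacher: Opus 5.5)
Your proposal is correct and is essentially the paper's argument: the paper also obtains the proposition directly from Lemma \ref{Lema1} (with $\eta=\rho_{[I_1,\dots,I_k]}$, which gives $\leq$ by minimality) and Lemma \ref{Lema2} (which gives $\geq$). Your remark that $\rho_{[I_1,\dots,I_k]}$ is both a member and a lower bound of the set of centralizing congruences is a tidy extra touch, since it also shows that the least such congruence exists.
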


Observe that, in an algebra $\bA$ with zero, we have $\rho_{\{0\}}=0_A$ and $\rho_{A}=1_A$. We denote by $\Iibf(\bA)$ the algebra of all ideals of $\bA$, with operations
$$\{\cap,\cup\}\cup\{[\ \underbrace{\cdot,\ldots,\cdot}_{k}\ ]:k\geq 2\},$$
where $\cap$ and $\cup$ denote the binary operations of intersection and union. Also, for $k\in\N$, $[I_{1},\ldots,I_{k}]$ denotes the $k$-ary commutator of ideals $I_{1},\ldots,I_{k}$, introduced in Definition \ref{d:commutatorIdeals}. Further, let $\mathbf{ RCon}(\bA)$ denote the algebra of all Rees congruences corresponding to ideals in $\bA$, with operations
$$\{\wedge,\vee\}\cup\{[\ \underbrace{\cdot,\ldots,\cdot}_{k}\ ]:k\geq 2\},$$
where $\wedge$ and $\vee$ denote the meet and join of congruences, and for $k\in\N$, $[\rho_{I_{1}},\ldots,\rho_{I_{k}}]$ is the $k$-ary commutator of Rees congruences $\rho_{I_{1}},\ldots,\rho_{I_{k}}$, from Definition \ref{d:commCong}. The following statement can be proved directly as in \cite[Corollary 3.2]{RadMudr1}.

\begin{cor}\label{c:isomAlg} Let $\bA$ be an algebra with zero. Then $\Iibf(\bA)\cong\mathbf{RCon}(\bA)$.
\end{cor}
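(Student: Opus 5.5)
We will prove Corollary \ref{c:isomAlg} by showing that the map $\Phi:\Iibf(\bA)\to\mathbf{RCon}(\bA)$, $\Phi(I)=\rho_I$, is a bijection preserving every fundamental operation. Surjectivity holds by the definition of $\mathbf{RCon}(\bA)$. For injectivity, we observe that in an algebra with zero every ideal contains $0$, and that $I$ is exactly the $\rho_I$-class of $0$. Indeed, if $a\in I$ then $(0,a)\in I\times I$. Conversely, if $(0,a)\in\rho_I$, then either $(0,a)\in I\times I$, or $a=0\in I$. Hence $I$ is recovered from $\rho_I$, so $\rho_I=\rho_J$ implies $I=J$. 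The same description gives order preservation in both directions: $I\subseteq J$ if and only if $\rho_I\le\rho_J$. The forward implication follows directly from the formula for $\rho_I$, and the reverse by comparing the classes of $0$.

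The commutator operations are handled entirely by Proposition \ref{p:THEthm}. For $k\ge2$ and ideals $I_1,\dots,I_k$, the set $[I_1,\dots,I_k]$ is an ideal by Proposition \ref{p:comm_is_ideal}, so the operation is well defined in $\Iibf(\bA)$, and $\Phi([I_1,\dots,I_k])=\rho_{[I_1,\dots,I_k]}=[\rho_{I_1},\dots,\rho_{I_k}]$. This also shows that $\mathbf{RCon}(\bA)$ is closed under the commutators.

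For intersection, we first check that $I\cap J$ is an ideal. It is nonempty since it contains $0$, and it is closed under the ideal condition because each of $I$ and $J$ is. Next we verify $\rho_I\wedge\rho_J=\rho_{I\cap J}$. If $(a,b)\in\rho_I\cap\rho_J$ with $a\ne b$, then $a,b\in I$ and $a,b\in J$, so $(a,b)\in(I\cap J)^2$. The reverse inclusion is immediate.

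For union, $I\cup J$ is again an ideal by the same argument. It remains to show $\rho_I\vee\rho_J=\rho_{I\cup J}$, which is the only step needing some care. For $\le$, note that $\rho_I,\rho_J\le\rho_{I\cup J}$ by the order preservation above. For $\ge$, take $a,b\in I\cup J$. Then $a$ is related to $0$ by $\rho_I$ or by $\rho_J$, and likewise $b$. So $a\mathrel{\rho_{I}\circ\rho_J}\ldots$ gives a chain $a$ -- $0$ -- $b$ of length two inside $\rho_I\cup\rho_J$, and hence $(a,b)\in\rho_I\vee\rho_J$. Diagonal pairs are trivially included.

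Thus $\Phi$ is a bijection commuting with $\cap/\wedge$, $\cup/\vee$ and all the commutators, which gives $\Iibf(\bA)\cong\mathbf{RCon}(\bA)$. The main conceptual input is Proposition \ref{p:THEthm}; the rest is routine, with the zero element serving as a common point of all ideals, which makes the join computation work.
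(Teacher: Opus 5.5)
Your proof is correct and is essentially the argument the paper has in mind; the paper itself only points to the analogous proof of \cite[Corollary 3.2]{RadMudr1}. As there, $I\mapsto\rho_I$ is a bijection because $I$ is the $\rho_I$-class of $0$, meet and join are checked directly (with $0$ linking any two elements of $I\cup J$), and the commutators are preserved by Proposition \ref{p:THEthm}. The one point you leave implicit is that every ideal contains $0$: it follows from the standing assumption that some fundamental operation $f$ has arity at least $2$, since evaluating $f$ at some $i\in I$ with all remaining arguments equal to $0$ yields $0\in I$.
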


 Using this connection, we will investigate the properties of commutators of ideals. As usual, we use $S_k$ for the group of all permutations of the set $\{1,\dotsc,k\}$, where $k\in\N$.

\begin{lem}\label{l:PropIdeals} Let $\{J_{l}:l\in L\}$ be a family of ideals of an algebra $\bA$ with zero. Further, let $k,m\in\N$ and $j\in\{1,\ldots,k\}$, and let $I_{0},I_{1},\ldots,I_{k+m},J_{1},\ldots,J_{k}$ be ideals of $\bA$, as well. Then, we have
\begin{description}
    \item[(ACI1)] $[I_{1},\ldots,I_{k}]\subseteq I_{1}\cap\cdots\cap I_{k}$;
    \item[(ACI2)] if $J_{1}\subseteq I_{1},\ldots, J_{k}\subseteq I_{k}$, then $[J_{1},\ldots,J_{k}]\subseteq [I_{1},\ldots, I_{k}]$;
    \item[(ACI3)] $[I_{0},I_{1},\ldots,I_{k}]\subseteq [I_{1},\ldots,I_{k}]$;
    \item[(ACI4)] $[I_{\pi(1)},\ldots,I_{\pi(k)}]=[I_{1},\ldots,I_{k}]$ for all $\pi\in S_k$;
    \item[(ACI7)] $
    \bigcup_{l\in L}[I_{1},\ldots,I_{j-1},J_{l},I_{j+1},\ldots, I_{k}]=\\
    =[I_{1},\ldots,I_{j-1},\bigcup_{l\in L}J_{l},I_{j+1},\ldots, I_{k}]$;
    \item[(ACI8)] $[I_{1},\ldots,I_{k},[I_{k+1},\ldots,I_{k+m}]]\subseteq[I_{1},\ldots,I_{k+m}]$.
\end{description}
\end{lem}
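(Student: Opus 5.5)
Each property is checked directly from Definition \ref{d:commutatorIdeals}, using Propositions \ref{idealsandpolynomials}, \ref{p:comm_is_ideal} and \ref{polynomialzero}. One could try to transfer (HC1)--(HC3) through Corollary \ref{c:isomAlg}, but the direct route is just as short and also gives (ACI4), (ACI7) and (ACI8).

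\textbf{(ACI1).} Take an element $p(i_1,\dots,i_k)$ with $p\in\Pol^*_k(\bA)$ and $i_j\in I_j$. For each fixed $j$, the tuple contains $i_j\in I_j$, so Proposition \ref{idealsandpolynomials} applied to the ideal $I_j$ puts the value in $I_j$. Hence the element lies in $I_1\cap\cdots\cap I_k$.

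\textbf{(ACI2).} This is immediate, since generators of the left side are generators of the right side.

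\textbf{(ACI4).} Permuting the variables of $p$ gives $q(x_1,\dots,x_k)=p(x_{\pi^{-1}(1)},\dots,x_{\pi^{-1}(k)})$. This $q$ is again in $\Pol^*_k(\bA)$, because all variables still occur. So every generator of one side is a generator of the other.

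\textbf{(ACI3).} Take $p(i_0,i_1,\dots,i_k)$ with $p\in\Pol^*_{k+1}(\bA)$. Fix $i_0$ as a constant, giving $q(x_1,\dots,x_k)=p(i_0,x_1,\dots,x_k)$. As a polynomial term, $q$ still contains $x_1,\dots,x_k$, so $q\in\Pol^*_k(\bA)$. Therefore $p(i_0,\dots,i_k)=q(i_1,\dots,i_k)\in[I_1,\dots,I_k]$.

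\textbf{(ACI8).} An element of the left side has the form $p(i_1,\dots,i_k,c)$ with $c=q(i_{k+1},\dots,i_{k+m})$ and $q\in\Pol^*_m(\bA)$. Substituting $q$ for the last variable of $p$ yields $r(x_1,\dots,x_{k+m})=p(x_1,\dots,x_k,q(x_{k+1},\dots,x_{k+m}))$. This $r$ contains all $k+m$ variables, so $r\in\Pol^*_{k+m}(\bA)$ and the element lies in $[I_1,\dots,I_{k+m}]$. Here the earlier steps are used only to justify that composition of such polynomials stays in $\Pol^*$.

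\textbf{(ACI7), inclusion $\subseteq$.} This follows from (ACI2), since each $J_l\subseteq\bigcup_{l\in L} J_l$.

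\textbf{(ACI7), inclusion $\supseteq$.} A generator $p(i_1,\dots,i_{j-1},u,i_{j+1},\dots,i_k)$ has $u\in\bigcup_l J_l$, so $u\in J_l$ for some $l$. The generator then lies in $[I_1,\dots,J_l,\dots,I_k]$.

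The main obstacle is that Definition \ref{d:commutatorIdeals} gives a commutator that is literally a set of generators, not an ideal generated by them. In (ACI7) the union must therefore be read as the set union, and I check that this union is itself an ideal. Each member of the family is an ideal by Proposition \ref{p:comm_is_ideal}, and all members contain $0$, so the union is nonempty. If $x$ lies in some member and $f$ is a fundamental operation, then any application of $f$ with $x$ as one argument stays in that same member. So the union is an ideal, the set equality holds, and $\bigcup_{l\in L} J_l$ is a legitimate ideal argument on the right.
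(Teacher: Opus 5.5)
Your proof is correct. For (ACI4), (ACI7) and (ACI8) you argue as the paper does: permute the variables, use (ACI2) plus a direct check on generators, and substitute $q$ into the last variable of $p$.

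The difference is in (ACI1)--(ACI3). The paper does not check these from Definition \ref{d:commutatorIdeals}. Instead it transfers (HC1)--(HC3) of Proposition \ref{HC} through Corollary \ref{c:isomAlg}, i.e.\ through Proposition \ref{p:THEthm}. That is shorter, but it implicitly relies on three things:
\begin{itemize}
\item the zero element, via Lemma \ref{Lema2};
\item the fact that $I\mapsto\rho_I$ preserves and reflects inclusion and sends $\cap$ to $\wedge$;
\item $k\geq 2$, since Definition \ref{d:commCong} and Proposition \ref{p:THEthm} are only stated for $k\geq 2$. So the case $k=1$ of (ACI1)--(ACI3) is not literally covered by the transfer.
\end{itemize}
Your direct route avoids all of this. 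You use Proposition \ref{idealsandpolynomials} for (ACI1), and for (ACI3) you freeze $i_0$ as a constant to obtain a member of $\Pol^{*}_{k}(\bA)$. This is self-contained and treats every $k\in\N$ uniformly. It also shows the whole lemma holds in an arbitrary algebra, since none of your steps uses the zero (for (ACI7), read with $L\neq\emptyset$).

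One small correction to your last paragraph. The set equality in (ACI7) does not require the union of the commutators to be an ideal. What is required, both for the right-hand side to be a commutator of ideals and for (ACI2) to apply, is that $\bigcup_{l\in L}J_l$ is an ideal. That follows from the same one-line argument when $L\neq\emptyset$.
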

\begin{proof} Properties \textbf{(ACI1)}--\textbf{(ACI3)} follow from Corollary \ref{c:isomAlg} and Proposition \ref{HC}. For \textbf{(ACI4)}, we observe that a permutation of the arguments in a polynomial term results in a polynomial term depending on the same number of variables. The proof for \textbf{(ACI7)} is divided: for $(\subseteq)$, we use \textbf{(ACI2)}, and for $(\supseteq)$ we use Definition \ref{d:commutatorIdeals} directly. Now, let us prove \textbf{(ACI8)}. Let $a\in[I_{1},\ldots,I_{k},[I_{k+1},\ldots,I_{k+m}]]$. By Definition \ref{d:commutatorIdeals}, we have $a=p\tup{a}{1}{k+1}$ for $p\in\Pol^{*}_{k+1}(\bA)$ such that $a_1\in I_1,\dots,a_k\in I_k$ and $a_{k+1}\in[I_{k+1},\ldots,I_{k+m}]$. Hence, there exists a $q\in\Pol^{*}_{m}(\bA)$ such that $a_{k+1}=q\tup{i}{1}{m}$ for $i_1\in I_{k+1},\dots,i_{m}\in I_{k+m}$. Now, the polynomial $p(x_1,\dots,x_k,q(x_{k+1},\ldots,x_{k+m}))$ is in $\Pol^{*}_{k+m}(\bA)$, and therefore $a=p(a_1,\dots,a_k,q\tup{i}{1}{m})\in[I_{1},\ldots,I_{k+m}]$.
\end{proof}

\begin{lem}\label{HC4-8} Let $\{\rho_{J_{l}}:l\in L\}$ be a family of Rees congruences in an algebra $\bA$ with zero. Further, let $k,m\in\N$, and $j\in\{1,\ldots,k\}$. For Rees congruences $\rho_{I_{0}},\rho_{I_{1}},\ldots,\rho_{I_{k}},\rho_{J_{1}},\ldots,\rho_{J_{k}}$ on $\bA$, we have
\begin{description}
    \item[(HC4)] $[\rho_{I_{\pi(1)}},\ldots,\rho_{I_{\pi(k)}}]=[\rho_{I_{1}},\ldots,\rho_{I_{k}}]$ for any $\pi\in S_k$;
    \item[(HC5)] $\Cc(\rho_{I_{1}},\ldots,\rho_{I_{k}};\delta)$ iff $[\rho_{I_{1}},\ldots,\rho_{I_{k}}]\leq \delta$;
    \item[(HC7)] $\bigvee_{l\in L}[\rho_{I_{1}},\ldots,\rho_{I_{j-1}},\rho_{J_{l}},\rho_{I_{j+1}},\ldots, \rho_{I_{k}}]=\\ =[\rho_{I_{1}},\ldots,\rho_{I_{j-1}},\bigvee_{l\in L}\rho_{J_{l}},\rho_{I_{j+1}},\ldots, \rho_{I_{k}}]$;
    \item[(HC8)] $[\rho_{I_{1}},\ldots,\rho_{I_{k}},[\rho_{I_{k+1}},\ldots,\rho_{I_{k+m}}]]\leq[\rho_{I_{1}},\ldots,\rho_{I_{k+m}}]$.
\end{description}
\end{lem}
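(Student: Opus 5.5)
The plan is to transfer each property from the corresponding property of commutators of ideals in Lemma \ref{l:PropIdeals}. The transfer goes through Proposition \ref{p:THEthm} and the isomorphism of Corollary \ref{c:isomAlg}. The basic tool is that, by Proposition \ref{p:THEthm}, every commutator of Rees congruences is again a Rees congruence, namely $[\rho_{I_1},\ldots,\rho_{I_k}]=\rho_{[I_1,\ldots,I_k]}$. Moreover, the map $I\mapsto\rho_I$ is injective and order preserving in both directions: $J\subseteq I$ iff $\rho_J\le\rho_I$, since $0$ lies in every ideal and $\rho_I$ identifies $0$ exactly with the elements of $I$.

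For \textbf{(HC4)}, Proposition \ref{p:THEthm} and \textbf{(ACI4)} give
$$[\rho_{I_{\pi(1)}},\ldots,\rho_{I_{\pi(k)}}]=\rho_{[I_{\pi(1)},\ldots,I_{\pi(k)}]}=\rho_{[I_1,\ldots,I_k]}=[\rho_{I_1},\ldots,\rho_{I_k}].$$

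For \textbf{(HC5)}, the forward direction follows from Lemma \ref{Lema2}: $\Cc(\rho_{I_1},\ldots,\rho_{I_k};\delta)$ gives $\rho_{[I_1,\ldots,I_k]}\subseteq\delta$, and then Proposition \ref{p:THEthm} applies. The converse follows from Lemma \ref{Lema1}, since $\rho_{[I_1,\ldots,I_k]}\le\delta$. Note that \textbf{(HC5)} is stated for $k\ge 2$, as in those lemmas.

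For \textbf{(HC8)}, the inner commutator equals $\rho_{[I_{k+1},\ldots,I_{k+m}]}$. This is a Rees congruence, so Proposition \ref{p:THEthm} applies to the outer commutator and gives $\rho_{[I_1,\ldots,I_k,[I_{k+1},\ldots,I_{k+m}]]}$. By \textbf{(ACI8)} and monotonicity of $I\mapsto\rho_I$, this is at most $\rho_{[I_1,\ldots,I_{k+m}]}=[\rho_{I_1},\ldots,\rho_{I_{k+m}}]$. If $m=1$ the inner bracket is unary; here I would read $[I]=I$ via Definition \ref{d:commutatorIdeals} with the identity polynomial, since $\Pol^*_1$ contains $x$ and $[I]\subseteq I$ because $I$ is an ideal.

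The main obstacle is \textbf{(HC7)}. Here one needs that the join of Rees congruences is the Rees congruence of the union, $\bigvee_{l}\rho_{J_l}=\rho_{\bigcup_l J_l}$. First I would check that $\bigcup_l J_l$ is an ideal; this is immediate from Definition \ref{Ideal} if $L\neq\emptyset$. Next I would verify the equality of congruences. The inclusion $\ge$ holds because any $a\in J_l$ is $\rho_{J_l}$-related to $0$, so any two elements of the union are connected through $0$. The inclusion $\le$ holds because each $\rho_{J_l}\le\rho_{\bigcup J_l}$. This is exactly the join part of the isomorphism in Corollary \ref{c:isomAlg}, which I may cite instead. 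Then
$$\bigvee_l[\ldots,\rho_{J_l},\ldots]=\bigvee_l\rho_{[\ldots,J_l,\ldots]}=\rho_{\bigcup_l[\ldots,J_l,\ldots]}=\rho_{[\ldots,\bigcup_l J_l,\ldots]}=[\ldots,\rho_{\bigcup_l J_l},\ldots]=[\ldots,\bigvee_l\rho_{J_l},\ldots],$$
using Proposition \ref{p:THEthm}, the join identity (applied to the ideals $[\ldots,J_l,\ldots]$, which are ideals by Proposition \ref{p:comm_is_ideal}), \textbf{(ACI7)}, and Proposition \ref{p:THEthm} again.
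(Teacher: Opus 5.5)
Your proposal is correct and follows essentially the same route as the paper: you transfer (ACI4), (ACI7) and (ACI8) to Rees congruences via Proposition \ref{p:THEthm} and the isomorphism of Corollary \ref{c:isomAlg}, and you get (HC5) from Lemma \ref{Lema1} together with Proposition \ref{p:THEthm}; the only difference there is that for the forward direction the paper uses the minimality in Definition \ref{d:commCong}, where you use Lemma \ref{Lema2}. Your explicit check that $\bigvee_{l\in L}\rho_{J_l}=\rho_{\bigcup_{l\in L}J_l}$ for arbitrary nonempty families supplies a detail the paper leaves implicit, since Corollary \ref{c:isomAlg} only concerns binary joins.
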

\begin{proof} Statement \textbf{(HC5)} follows directly from Lemma \ref{Lema1}, Definition \ref{d:commCong} and Proposition \ref{p:THEthm}. Statements \textbf{(HC4)}, \textbf{(HC7)} and \textbf{(HC8)} follow from Lemma \ref{l:PropIdeals} (properties \textbf{(ACI4)}, \textbf{(ACI7)}, and \textbf{(ACI8)}, respectively) and the connection established in Corollary \ref{c:isomAlg}.
\end{proof}

Now, for $n\in\mathbb{N}$, we may give the characterisation of $n$-supernilpotent algebras with a zero element. As usual, for $m\in\N$, an algebra $\bA$, and $p\in\Pol^{*}_{m+1}(\bA)$ we denote
$$p(A^{m+1})=\{p(a_{1},\ldots,a_{m+1}) \mid a_{1},\ldots,a_{m+1}\in A\}.$$

\begin{thm}\label{p:supernil}
    Let $\bA$ be an algebra with zero $0$, and let $n\in\N$. Then, $\bA$ is $n$-supernilpotent if and only if all $p\in\Pol^{*}_{n+1}(\bA)$ are vanishing polynomials.
\end{thm}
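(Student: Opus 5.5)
We reduce the statement to a computation with ideals by way of Proposition \ref{p:THEthm}. In an algebra with zero we have $1_A=\rho_A$ and $0_A=\rho_{\{0\}}$. Applying Proposition \ref{p:THEthm} with $k=n+1\geq 2$ and $I_1=\cdots=I_{n+1}=A$ gives
$$[\underbrace{1_A,\ldots,1_A}_{n+1}]=[\rho_A,\ldots,\rho_A]=\rho_{[A,\ldots,A]}.$$
Hence $\bA$ is $n$-supernilpotent if and only if $\rho_{[A,\ldots,A]}=\rho_{\{0\}}$.

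The first step is to turn this into an equality of ideals. By the definition of the Rees congruence, $\rho_I=0_A$ holds exactly when $I\times I$ is contained in the diagonal, that is, when $|I|\leq 1$. Every ideal of an algebra with zero contains $0$, and by Proposition \ref{p:comm_is_ideal} the set $[A,\ldots,A]$ is an ideal. So $\rho_{[A,\ldots,A]}=0_A$ if and only if $[A,\ldots,A]=\{0\}$. Alternatively, one can appeal directly to the isomorphism of Corollary \ref{c:isomAlg}, which is injective on ideals.

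The second step unfolds Definition \ref{d:commutatorIdeals}. With all ideals equal to $A$, we have
$$[A,\ldots,A]=\bigcup_{p\in\Pol^{*}_{n+1}(\bA)}p(A^{n+1}).$$
This union equals $\{0\}$ if and only if $p(A^{n+1})=\{0\}$ for every $p\in\Pol^{*}_{n+1}(\bA)$, which by Definition \ref{vanishing} says precisely that every such $p$ is vanishing. Chaining the equivalences from both steps proves the theorem.

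No step here is a real obstacle, since the substantive work is already contained in Lemma \ref{Lema2} and Proposition \ref{p:THEthm}. The only points needing care are the identification $\rho_{\{0\}}=0_A$ together with the injectivity of $I\mapsto\rho_I$ (immediate from the definition of $\rho_I$), and checking that the arity $n+1\geq 2$ is admissible in Proposition \ref{p:THEthm}, which holds because $n\in\N$.
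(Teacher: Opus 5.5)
Your proposal is correct and follows the paper's proof essentially step for step: you identify $1_A=\rho_A$ and $0_A=\rho_{\{0\}}$, apply Proposition~\ref{p:THEthm} with $k=n+1$, use injectivity of $I\mapsto\rho_I$ to reduce to $[A,\ldots,A]=\{0\}$, and then unfold Definition~\ref{d:commutatorIdeals}. The only cosmetic difference is that you check $\rho_I=0_A \iff I=\{0\}$ directly from the definition of the Rees congruence, whereas the paper simply cites Corollary~\ref{c:isomAlg} for this step.
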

\begin{proof} By Definition \ref{d:supernil}, $\bA$ is $n$-supernilpotent if $[\ \underbrace{1_{A},\ldots,1_{A}}_{n+1}\ ]=0_{A}$, which holds if and only if $[\ \underbrace{\rho_{A},\ldots,\rho_{A}}_{n+1}\ ]=\rho_{\{0\}}$. By Proposition \ref{p:THEthm}, this is equivalent to
$\rho_{[\, \underbrace{\scriptstyle A,\ldots,A}_{n+1}\,]}=\rho_{\{0\}},$
which is true if and only if $[\ \underbrace{A,\ldots,A}_{n+1}\ ]=\{0\}$, by Corollary \ref{c:isomAlg}. This means that $p(A^{n+1})=\{0\}$ for any $p\in\Pol^{*}_{n+1}(\bA)$, by Definition \ref{d:commutatorIdeals}. The statement follows using Definition \ref{vanishing}.
\end{proof}

We will derive similar (though not that easy to check) conditions for $n$-nilpotency and $n$-solvability. But before that, we may prove the following.

\begin{pro}\label{supernilpnilp}
  Let $\bA$ be an algebra with zero and let $n\in\N$. If $\bA$ is $n$-supernilpotent, then $\bA$ is $n$-nilpotent.
\end{pro}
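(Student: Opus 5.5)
The argument will compare the nested binary commutator with the $(n+1)$-ary commutator. It uses only properties (HC2) and (HC8), which are available for Rees congruences by Lemma \ref{HC4-8} and Proposition \ref{HC}. Write $N_1=[1_A,1_A]$ and $N_{j+1}=[1_A,N_j]$ for $j\geq 1$, so that $n$-nilpotency means $N_n=0_A$. The key observation is that every $N_j$ is itself a Rees congruence. Indeed, $1_A=\rho_A$, and Proposition \ref{p:THEthm} gives $[\rho_A,\rho_I]=\rho_{[A,I]}$, where $[A,I]$ is an ideal by Proposition \ref{p:comm_is_ideal}. By induction, $N_j=\rho_{K_j}$ with $K_1=[A,A]$ and $K_{j+1}=[A,K_j]$. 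This matters because (HC8) is stated only for Rees congruences.

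Next, by induction on $j$, I will show that $N_j\leq[\underbrace{1_A,\ldots,1_A}_{j+1}]$. For $j=1$ there is nothing to prove. For the inductive step, assume $N_j\leq[1_A,\ldots,1_A]$ with $j+1$ entries. Then (HC2) gives
$$N_{j+1}=[\rho_A,N_j]\leq[\rho_A,[\rho_A,\ldots,\rho_A]],$$
where the inner commutator has $j+1$ entries. Applying (HC8) with $k=1$ and $m=j+1$ bounds this by the $(j+2)$-ary commutator $[\rho_A,\ldots,\rho_A]$. For the application of (HC2), note that the inner commutator is also a Rees congruence, namely $\rho_{[A,\ldots,A]}$, again by Proposition \ref{p:THEthm}.

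Taking $j=n$ gives $N_n\leq[1_A,\ldots,1_A]$ with $n+1$ entries, and the right-hand side equals $0_A$ by $n$-supernilpotency. Hence $N_n=0_A$, which is $n$-nilpotency. There is a notational point to settle: Definition \ref{d:nilpotent} writes $n$ nested brackets, and the count must match the $n+1$ entries in Definition \ref{d:supernil}. I read the definition as $n$ commutator operations, so that $1$-nilpotency coincides with abelianness.

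An alternative is to carry out the whole argument at the level of ideals, using (ACI2) and (ACI8) on $K_n\subseteq[A,\ldots,A]$, and then transfer the conclusion via Corollary \ref{c:isomAlg} and Theorem \ref{p:supernil}. The only real obstacle is verifying that the intermediate congruences are Rees, so that the lemmas apply; that is settled by Proposition \ref{p:THEthm} as above.
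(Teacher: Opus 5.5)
Your proposal is correct and follows the paper's route: the paper's proof simply cites Definitions \ref{d:nilpotent} and \ref{d:supernil} together with (HC8) of Lemma \ref{HC4-8}. Your induction spells out what that one-line proof leaves implicit, namely the monotonicity step via (HC2) and the check, through Proposition \ref{p:THEthm}, that all intermediate congruences are Rees, so the lemma applies.
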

\begin{proof}
The statement follows directly from Definitions \ref{d:nilpotent} and \ref{d:supernil} and Proposition \ref{HC4-8} (part (HC8)).
\end{proof}

In Definition \ref{d:n-sol}, we use the congruence $[1_{A}]^{(n)}$ in order to define $n$-solvability. Here, we introduce an ideal which is an analogue of that congruence, in order to determine the condition for $n$-solvability.

\begin{de}\label{d:P_n}
    In an algebra $\bA$ with a zero element, let $[A]^{(1)}=[A,A]$, and for $n\geq 1$ let $[A]^{(n+1)}=[[A]^{(n)},[A]^{(n)}]$.
\end{de}

The following statement can be proved by induction.

\begin{lem}\label{l:rho_P_n}
    Let $\bA$ be an algebra with zero. For $n\in\mathbb{N}$, we have $[\rho_{A}]^{(n)}=\rho_{[A]^{(n)}}$.
\end{lem}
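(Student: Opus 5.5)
We prove the statement by induction on $n\in\N$, using Proposition \ref{p:THEthm} at each step together with the observation that every ideal in the chain $[A]^{(n)}$ is again an ideal. That observation holds because $A$ is an ideal and, by Proposition \ref{p:comm_is_ideal}, the commutator of ideals is an ideal. This means each $[A]^{(n)}$ has a well-defined Rees congruence $\rho_{[A]^{(n)}}$, so the statement makes sense.

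For the base case $n=1$, we have $[\rho_A]^{(1)}=[\rho_A,\rho_A]$ by definition. Proposition \ref{p:THEthm} with $k=2$ and $I_1=I_2=A$ turns this into $\rho_{[A,A]}$. By Definition \ref{d:P_n}, $[A,A]=[A]^{(1)}$, so $[\rho_A]^{(1)}=\rho_{[A]^{(1)}}$.

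For the inductive step, assume $[\rho_A]^{(n)}=\rho_{[A]^{(n)}}$. Then
\[
[\rho_A]^{(n+1)}=\bigl[[\rho_A]^{(n)},[\rho_A]^{(n)}\bigr]=\bigl[\rho_{[A]^{(n)}},\rho_{[A]^{(n)}}\bigr].
\]
Since $[A]^{(n)}$ is an ideal, Proposition \ref{p:THEthm} applies again with $k=2$ and $I_1=I_2=[A]^{(n)}$. It gives
\[
\bigl[\rho_{[A]^{(n)}},\rho_{[A]^{(n)}}\bigr]=\rho_{[[A]^{(n)},[A]^{(n)}]}=\rho_{[A]^{(n+1)}},
\]
which completes the induction.

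There is no real obstacle in this argument. The only point needing care is to check, before invoking Proposition \ref{p:THEthm}, that each $[A]^{(n)}$ is an ideal; this is what Proposition \ref{p:comm_is_ideal} guarantees. It is also worth noting that $\rho_A=1_A$, so the result identifies the derived series $[1_A]^{(n)}$ used in Definition \ref{d:n-sol}.
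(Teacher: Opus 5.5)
Your proof is correct and follows the same route as the paper, which only remarks that the lemma ``can be proved by induction''. Your induction applies Proposition \ref{p:THEthm} with $k=2$ at each step, and you correctly note that Proposition \ref{p:comm_is_ideal} is what makes each $[A]^{(n)}$ an ideal, so that the proposition applies.
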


\begin{pro}\label{t:n-sol} Let $\bA$ be an algebra with zero $0$, and let $n\in\mathbb{N}$. Then, $\bA$ is $n$-solvable if and only if $[A]^{(n)}=\{0\}$.
\end{pro}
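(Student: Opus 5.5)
The plan is to mirror the proof of Theorem \ref{p:supernil}, translating between congruences and ideals via Lemma \ref{l:rho_P_n} and Corollary \ref{c:isomAlg}. By Definition \ref{d:n-sol}, $\bA$ is $n$-solvable if and only if $[1_A]^{(n)}=0_A$. In an algebra with zero we have $1_A=\rho_A$ and $0_A=\rho_{\{0\}}$, so this condition reads $[\rho_A]^{(n)}=\rho_{\{0\}}$.

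Next I apply Lemma \ref{l:rho_P_n}, which gives $[\rho_A]^{(n)}=\rho_{[A]^{(n)}}$. Hence $n$-solvability is equivalent to $\rho_{[A]^{(n)}}=\rho_{\{0\}}$. For this to make sense, $[A]^{(n)}$ must be an ideal. This holds by induction using Proposition \ref{p:comm_is_ideal}: $A$ is an ideal, and the commutator of ideals is again an ideal.

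Finally, the correspondence $I\mapsto\rho_I$ is injective on ideals, which is part of Corollary \ref{c:isomAlg}. It can also be checked directly: the class of $0$ in $\rho_I$ is exactly $I$, since $0\in I$. Therefore $\rho_{[A]^{(n)}}=\rho_{\{0\}}$ holds if and only if $[A]^{(n)}=\{0\}$, which is the claim.

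There is no real obstacle, assuming Lemma \ref{l:rho_P_n} as stated. If that lemma needed justification, the key step would be its inductive case. Suppose $[\rho_A]^{(n)}=\rho_{[A]^{(n)}}$. Then
\[[\rho_A]^{(n+1)}=[\rho_{[A]^{(n)}},\rho_{[A]^{(n)}}]=\rho_{[[A]^{(n)},[A]^{(n)}]}\]
by Proposition \ref{p:THEthm} with $k=2$, and the right-hand side equals $\rho_{[A]^{(n+1)}}$. The base case $n=1$ is Proposition \ref{p:THEthm} applied directly to $I_1=I_2=A$.
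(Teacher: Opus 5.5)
Your proof is correct and follows the paper's argument: rewrite $[1_A]^{(n)}=0_A$ as $[\rho_A]^{(n)}=\rho_{\{0\}}$, apply Lemma \ref{l:rho_P_n}, and conclude using injectivity of $I\mapsto\rho_I$. Your extra justifications fill in details the paper leaves implicit: that $[A]^{(n)}$ is an ideal, that the $\rho_I$-class of $0$ is $I$, and the induction proving Lemma \ref{l:rho_P_n} via Proposition \ref{p:THEthm}.
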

\begin{proof} By Definition \ref{d:n-sol}, $\bA$ is $n$-solvable if $[1_{A}]^{(n)}=0_{A}$, which is equivalent to $[\rho_{A}]^{(n)}=\rho_{\{0\}}$. By Lemma \ref{l:rho_P_n}, we have $[\rho_{A}]^{(n)}=\rho_{[A]^{(n)}}$, so $A$ is $n$-solvable if and only if $[A]^{(n)}=\{0\}$.
\end{proof}

Analogously, we can prove the following proposition.

\begin{pro}\label{p:n-nil} Let $\bA$ be an algebra with zero $0$, and let $n\in\mathbb{N}$. Then, $\bA$ is $n$-nilpotent if and only if $\underbrace{[A,\ldots[}_{n}A,A]\ldots]=\{0\}$.
\end{pro}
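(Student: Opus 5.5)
The proof runs parallel to that of Proposition \ref{t:n-sol}: the iterated nilpotency commutator of $1_A$ is translated, one commutator at a time, into an iterated commutator of ideals. Define ideals recursively by $N_1=A$ and $N_{j+1}=[A,N_j]$ for $j\in\N$. By Proposition \ref{p:comm_is_ideal}, each $N_j$ is an ideal of $\bA$. With this notation, $\underbrace{[A,\ldots[}_{n}A,A]\ldots]=N_{n+1}$.

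Similarly, define congruences by $\gamma_1=1_A$ and $\gamma_{j+1}=[1_A,\gamma_j]$. Then $\underbrace{[1_A,\ldots[}_{n}1_A,1_A]\ldots]=\gamma_{n+1}$.

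The key step is to show by induction on $j$ that $\gamma_j=\rho_{N_j}$ for all $j\in\N$.
\begin{itemize}
\item Base case: $\gamma_1=1_A=\rho_A=\rho_{N_1}$, using the observation that $\rho_A=1_A$ in an algebra with zero.
\item Inductive step: assume $\gamma_j=\rho_{N_j}$. Since $N_j$ is an ideal, $\rho_{N_j}$ is a Rees congruence. Hence
$$\gamma_{j+1}=[\rho_A,\rho_{N_j}]=\rho_{[A,N_j]}=\rho_{N_{j+1}}$$
by Proposition \ref{p:THEthm} with $k=2$, $I_1=A$ and $I_2=N_j$.
\end{itemize}
The only point needing care is that Proposition \ref{p:THEthm} is stated for Rees congruences of ideals. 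That is exactly why Proposition \ref{p:comm_is_ideal} must be used at each step: it keeps the inductive hypothesis within the class of Rees congruences.

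With the induction done, $\bA$ is $n$-nilpotent if and only if $\gamma_{n+1}=0_A$, by Definition \ref{d:nilpotent}. This is equivalent to $\rho_{N_{n+1}}=\rho_{\{0\}}$, since $\rho_{\{0\}}=0_A$.

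It remains to see that $\rho_I=\rho_J$ forces $I=J$ for ideals $I,J$. This is the injectivity already contained in Corollary \ref{c:isomAlg}, but it is also immediate: $I$ is the $\rho_I$-class of $0$, because $0$ lies in every ideal. Hence $\rho_{N_{n+1}}=\rho_{\{0\}}$ holds if and only if $N_{n+1}=\{0\}$, which is the claim.

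I expect no real obstacle. The one point to get right is the bookkeeping of the nesting: the number of commutator brackets must match on the congruence side and the ideal side. This is handled by using the same indexing $j\mapsto j+1$ in both recursions.
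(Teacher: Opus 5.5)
Your proof is correct and follows the paper's route. The paper proves this ``analogously'' to Proposition~\ref{t:n-sol}: it identifies the iterated congruence commutator with the Rees congruence of the iterated ideal commutator by induction via Proposition~\ref{p:THEthm} (the analogue of Lemma~\ref{l:rho_P_n}), then uses injectivity of $I\mapsto\rho_I$ from Corollary~\ref{c:isomAlg}, for which your observation that $I$ is the $\rho_I$-class of $0$ is a valid self-contained substitute.
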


Since the conditions in the previous two statements are not stated in terms of polynomials (as is the case in Proposition \ref{p:supernil}), our next goal is to find alternative descriptions for $n$-nilpotency or $n$-solvability in an algebra with zero to make it easier to check the relationships among $n$-supernilpotency, $n$-nilpotency and $n$-solvability.

\section{Nilpotency in algebras with zero}\label{grupoids}

In this section, we assume that our algebras with zero have no unary fundamental operations. To investigate $n$-nilpotency, we need to determine the elements of the sets $[A,A],[A,[A,A]],[A,[A,[A,A]]],\ldots$. These are described in Lemmas \ref{l:G^2} and \ref{l:pi_n}, but the core of the analysis is conducted in the proof of Lemma \ref{l:[G,I]}.



\begin{lem}\label{l:[G,I]} Let $\bA=(A,F)$ be an algebra with zero, and let $I$ be an ideal of $\bA$. Then,
$$[A,I]=[I,A]=\bigcup_{f\in F}(f(I,A,\dots,A)\cup f(A,I,\dots,A)\cup\ldots\cup f(A,\dots,A,I)).$$
\end{lem}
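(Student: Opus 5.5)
First, $[A,I]=[I,A]$ is immediate from (ACI4) of Lemma \ref{l:PropIdeals}. Write $U$ for the union on the right-hand side. I would show $[I,A]\subseteq U$ and $U\subseteq[I,A]$ separately. Throughout, I use the standing assumption of this section that $\bA$ has no unary fundamental operations, so every nonconstant fundamental operation has arity at least two.

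For $U\subseteq[I,A]$, take $f\in F$ of arity $n\ge 2$ and an element $f(a_1,\dots,a_n)$ with some $a_j\in I$. Let $l\neq j$ be another position. The polynomial
$$p(x,y)=f(a_1,\dots,a_{j-1},x,a_{j+1},\dots,a_{l-1},y,a_{l+1},\dots,a_n)$$
(written with the obvious order if $l<j$) has a polynomial term containing both variables, so $p\in\Pol^{*}_2(\bA)$. Hence $f(a_1,\dots,a_n)=p(a_j,a_l)\in[I,A]$. Nullary operations contribute no sets to the union, since they have no argument places; if the convention is read differently, the constant $0$ still lies in $[I,A]$ because $[I,A]$ is an ideal. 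This inclusion is routine.

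For $[I,A]\subseteq U$, take $a=p(i,b)$ with $p\in\Pol^{*}_2(\bA)$, $i\in I$ and $b\in A$. I would argue by structural induction on the polynomial term $t$ of $p$. Since both variables $x$ and $y$ occur in $t$, the term $t$ is neither a variable nor a constant, so $t=f(t_1,\dots,t_n)$ for some $f\in F$ with $n\ge 1$. The case $n=1$ is excluded by the assumption, so $n\ge 2$. The variable $x$ occurs in some $t_j$. Then $t_j(x,y)$ is a polynomial in which $x$ appears, so by Proposition \ref{idealsandpolynomials}, applied to $t_j$ regarded as a polynomial in the variables that actually appear in it, we get $t_j(i,b)\in I$. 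This application needs only that $i\in I$ be substituted for a variable occurring in $t_j$; the other arguments are arbitrary. Consequently
$$a=f(t_1(i,b),\dots,t_n(i,b))$$
has its $j$-th argument in $I$, so $a\in f(A,\dots,A,I,A,\dots,A)\subseteq U$. Notice that no genuine induction is needed: it suffices to peel off the outermost operation.

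The main obstacle is ensuring that the outermost symbol is a genuine operation of arity at least two. This is exactly where the absence of unary operations is needed. Otherwise, an outermost unary $g$ could give $g(f(i,b))$, which need not lie in any $f(\dots,I,\dots)$. A second, smaller point is whether Proposition \ref{idealsandpolynomials} may be invoked for a subterm in which only $x$, and not $y$, occurs. I would handle this by viewing $t_j$ as a unary or binary polynomial in exactly its occurring variables, which puts it in the appropriate $\Pol^{*}$.
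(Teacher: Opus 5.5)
Your proof is correct and follows the paper's route. The $(\supseteq)$ direction is identical, and for $(\subseteq)$ the paper also peels off the outermost operation and places the argument containing the $I$-variable in $I$. The paper does this by an induction whose hypothesis, stated for $\Pol^{*}_2(\bA)$, does not literally cover subterms such as $x_2$ or subterms in which only one variable occurs; your direct appeal to Proposition \ref{idealsandpolynomials} handles those cases cleanly.

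One correction to your closing remark: the no-unary assumption is not what makes $(\subseteq)$ work. An outermost unary $g$ gives $g(t_1(i,b))\in g(I)$, and $g(I)$ is part of the union. The assumption is genuinely needed for $(\supseteq)$, because $g(I)\subseteq[I,A]$ can fail — for example, when all operations of arity at least two are constantly $0$ but $g$ is nonzero on $I$.
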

\begin{proof} The equality $[A,I]=[I,A]$ follows from part \textbf{(ACI4)} of Lemma \ref{l:PropIdeals}. Let us prove the remaining equality. To prove $(\subseteq)$, let $c\in [A,I]$. By Definition \ref{d:commutatorIdeals}, $c=p(a,i)$ for some $p\in\Pol^{*}_{2}(A)$ and elements $a\in A$ and $i\in I$. We prove the inclusion by induction on the complexity of the polynomial term of $p$. For the base case, we assume that $p$ contains only one operation symbol. Suppose without loss of generality that $p(x_1,x_2)=f(x_1,x_2,a_1,\dots,a_{k-2})$ for some $f\in F$ and elements $a_1,\dots,a_{k-2}\in A$.
Thus, $p(a,i)=f(a,i,a_1,\dots,a_{k-2})\in f(A,I,A,\dots,A)$ and the basis holds. For the inductive hypothesis, we assume that the statement is true for any polynomial term with fewer than $n$ operation symbols. Now, let $p$ have exactly $n$ operation symbols. Then $p(a,i)=f\tup{p}{1}{k}(a,i)$ for some polynomial terms $p_{1},\dots,p_{k}$. Here, $p_1,\dots,p_k$ have fewer than $n$ operation symbols and at least one of them depends on $x_2$. Without loss of generality, we assume that $p_1$ depends on $x_2$. By the inductive hypothesis, there exists a $g\in F$ such that $p_1(a,i)\in g(A,\dots,A,I,A,\dots,A)\subseteq I$ (the inclusion follows from Definition \ref{Ideal}). Clearly, $p_2(a,i),\dots,p_k(a,i)\in A$. Hence, we have $p(a,i)\in  f(I,A,\dots,A)$. Thus, we have proved the induction step.
To prove $(\supseteq)$, let $f\in F$ be of arity $n\geq 2$, let $a_1,\dots,a_{n-1}\in A$ and $i\in I$. Then, for $p(x_1,x_2)=f(x_1,x_2, a_2,\dots,a_{n-1})$, we have $p\in\Pol^{*}_{2}(\bA)$ and $p(i,a_1)=f(i,a_1,\dots,a_{n-1})$. Since $p(i,a_1)\in[I,A]$, we obtain $f(i,a_1,\dots,a_{n-1})\in[I,A]$. Therefore, $f(I,A,\dots,A)\subseteq[I,A]$. The proof for the remaining sets is analogous.
\end{proof}

\begin{lem}\label{l:G^2}
    In an algebra $\bA=(A,F)$ with zero, we have
    $$[A,A]=\bigcup_{f\in F}f(A,\dots,A).$$
\end{lem}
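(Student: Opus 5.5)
This is the case $I=A$ of Lemma \ref{l:[G,I]}, so I will derive it from that lemma. The set $A$ is an ideal of $\bA$, since every value of a fundamental operation lies in $A$. Applying Lemma \ref{l:[G,I]} with $I=A$ gives
$$[A,A]=\bigcup_{f\in F}\bigl(f(A,A,\dots,A)\cup f(A,A,\dots,A)\cup\dots\cup f(A,\dots,A,A)\bigr).$$
Each set in the inner union equals $f(A,\dots,A)$, so the right-hand side reduces to $\bigcup_{f\in F}f(A,\dots,A)$.

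The one point needing care is the arity of the operations in $F$. We assume that $\bA$ has no unary fundamental operations. The constant $0$ contributes $f(A,\dots,A)=\{0\}$. Since $0\in[A,A]$ (every ideal of an algebra with zero contains $0$, and $[A,A]$ is an ideal by Proposition \ref{p:comm_is_ideal}), adding or removing this term from the union changes nothing. Lemma \ref{l:[G,I]} is implicitly stated with the operations of arity at least two; this is harmless for the same reason, because the nullary operation only contributes the set $\{0\}$.

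For a self-contained check, I would also verify both inclusions directly.

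For $(\supseteq)$: let $f$ have arity $n\ge 2$ and $a_1,\dots,a_n\in A$. The polynomial $p(x_1,x_2)=f(x_1,x_2,a_3,\dots,a_n)$ lies in $\Pol^{*}_2(\bA)$, so $f(a_1,\dots,a_n)=p(a_1,a_2)\in[A,A]$. The constant $0$ lies in $[A,A]$ as noted above.

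For $(\subseteq)$: an element of $[A,A]$ has the form $p(a,b)$ with $p\in\Pol^{*}_2(\bA)$. The term of $p$ contains both variables, so it is not a variable or a constant. Hence its outermost symbol is some $f\in F$ of arity at least $2$, and $p(a,b)\in f(A,\dots,A)$.

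The main (minor) obstacle is exactly this bookkeeping about constants and arities; there is no genuine difficulty.
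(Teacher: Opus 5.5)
Your proposal is correct and takes the paper's approach: the paper proves this lemma as a direct corollary of Lemma~\ref{l:[G,I]} with $I=A$. Your extra direct check of both inclusions is also sound under the standing assumption of Section~\ref{grupoids} that there are no unary fundamental operations. One small remark on the constants: your bookkeeping tacitly assumes that $0$ is the only nullary symbol in $F$, since a further constant $c$ would contribute $\{c\}$, which need not lie in $[A,A]$, so the union (here as in the paper) should be read over the operations of arity at least $2$.
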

\begin{proof}
This is a direct corollary of Lemma \ref{l:[G,I]}.
\end{proof}



To describe the sets $[A,[A,A]],[A,[A,[A,A]]],\ldots$, we will formally define them as elements of a series.

\begin{de}\label{d:Pi_n} In an algebra $\bA=(A,F)$, let $\Pi_{1}A=A$, and for $n\geq 2$ let
\begin{equation*}
\begin{aligned}
\Pi_{n} A=[A,\Pi_{n-1} A].
\end{aligned}
\end{equation*}
\end{de}

Note that $\Pi_{2}A=\bigcup_{f\in F}f(A,\dots,A)$. We observe that $\Pi_{n} A$ is an ideal for all $n\in\N$. The next statement follows directly from Definition \ref{d:Pi_n} and Lemma \ref{l:[G,I]}.

\begin{lem}\label{l:pi_n} In an algebra $\bA=(A,F)$, is $\Pi_{1}A=A$, and for $n\geq 2$ we have
\begin{equation*}
\begin{aligned}
\Pi_{n} A=\bigcup_{f\in F}(f(\Pi_{n-1}A,A,\dots,A)&\cup f(A,\Pi_{n-1}A,\dots,A)\cup\ldots\cup\\
&\cup f(A,\dots,A,\Pi_{n-1}A)).
\end{aligned}
\end{equation*}
\end{lem}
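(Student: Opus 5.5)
The statement is Lemma \ref{l:pi_n}. I would prove it by unfolding Definition \ref{d:Pi_n} and applying Lemma \ref{l:[G,I]} to the appropriate ideal. The case $n=1$ is just the definition $\Pi_1A=A$, so there is nothing to prove there.

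For $n\geq 2$, the main step is to check that $\Pi_{n-1}A$ is an ideal of $\bA$, since Lemma \ref{l:[G,I]} only applies to ideals. I would show this by a short induction on $n$. For the base case, $\Pi_1A=A$ is trivially an ideal, because $A$ is non-empty and closed under all operations. For the inductive step, if $\Pi_{n-1}A$ is an ideal, then $\Pi_nA=[A,\Pi_{n-1}A]$ is a commutator of two ideals, and so it is an ideal by Proposition \ref{p:comm_is_ideal}. This is the only point that needs any care. Once it is settled, the rest is a substitution.

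With $\Pi_{n-1}A$ known to be an ideal, set $I=\Pi_{n-1}A$ in Lemma \ref{l:[G,I]}. That gives
$$\Pi_nA=[A,\Pi_{n-1}A]=\bigcup_{f\in F}\bigl(f(\Pi_{n-1}A,A,\dots,A)\cup\dots\cup f(A,\dots,A,\Pi_{n-1}A)\bigr),$$
which is exactly the claimed formula.

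The standing assumptions of this section are used implicitly. The algebra has a zero and no unary fundamental operations, which is the setting in which Lemma \ref{l:[G,I]} was proved. I would mention this briefly rather than re-derive anything. I do not expect any real obstacle beyond recording that the ideal property propagates along the series.
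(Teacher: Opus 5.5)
Your proposal is correct and is essentially the paper's argument. The paper likewise observes that every $\Pi_nA$ is an ideal, which you make explicit by induction using Proposition \ref{p:comm_is_ideal}, and then reads off the formula from Definition \ref{d:Pi_n} and Lemma \ref{l:[G,I]} with $I=\Pi_{n-1}A$. You were right to invoke the section's standing assumption of no unary fundamental operations, since Lemma \ref{l:[G,I]} fails without it.
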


Having described the sets which determine whether an algebra $\bA$ with zero is $n$-nilpotent (see Proposition \ref{p:n-nil}), we are almost ready to prove another equivalent condition for this property. All we now need is a way to describe the complexity of a polynomial term in terms of braces. We will define the depth of a polynomial term in the same manner as article \cite{MudrAichOpr} defines the height of a polynomial term (for technical reasons, our definition differs in the case of constants).

\begin{de}\label{d:depth} Let $\bA=(A,F)$ be an algebra with zero. The depth of a non-zero polynomial term $p\in\Pol(A)$ is defined by
    $$\dep(p)=\begin{cases}
                    0 & \text{if $p=x$, where $x$ is a variable or a}\\
                    & \text{constant,} \\
                    1+\max\{\dep(p_{1}),\dots,\dep(p_{n})\} & \text{if $p=f(p_{1},\dots,p_{n})$, where $p_{1},\dots,p_{n}$}\\
                    &\text{$\in\Pol(A)$ and $f\in F$ of arity $n$.}
                \end{cases}$$
\end{de}

\begin{exa} The depth of the polynomial term $a\cdot((x_{1}\cdot b)\cdot x_{2})$ is $3$, while the depth of the polynomial term $((a\cdot (x_{1}\cdot x_{2}))\cdot b)\cdot((x_{1}\cdot x_{3})\cdot c)$ is $4$. Note that the depth of a polynomial term in a groupoid is the maximal number of open braces at any point in the polynomial, increased by $1$.
\end{exa}

Finally, we may prove the main result of this section.

\begin{thm}\label{t:n-nilpot}
    Let $\bA$ be an algebra with zero $0$, and let $n\in\mathbb{N}$. Then $\bA$ is $n$-nilpotent if and only if all polynomials whose polynomial terms are of depth at least $n$ are vanishing polynomials.
\end{thm}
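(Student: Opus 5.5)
By Proposition \ref{p:n-nil}, $\bA$ is $n$-nilpotent if and only if $\underbrace{[A,\ldots[}_{n}A,A]\ldots]=\{0\}$. By Definition \ref{d:Pi_n}, this nested commutator with $n$ brackets is exactly $\Pi_{n+1}A$; for instance, $n=1$ gives $[A,A]=\Pi_2A$. So the whole task is to identify $\Pi_{n+1}A$ with the set of values of polynomials of depth at least $n$. I will prove, by induction on $m\ge 0$, the claim
$$\Pi_{m+1}A=\{0\}\cup\{p(a_1,\dots,a_k): p \text{ a polynomial term with } \dep(p)\ge m,\ k\in\N,\ a_1,\dots,a_k\in A\}.$$
Here polynomial terms built only from constants are allowed. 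The term $0$ is excluded from Definition \ref{d:depth}, so I adjoin $0$ separately; this is harmless because $0$ lies in every ideal, and $\Pi_{m+1}A$ is an ideal. Once the claim holds for $m=n$, the theorem follows: $\Pi_{n+1}A=\{0\}$ holds exactly when every polynomial of depth at least $n$ takes only the value $0$, i.e. is vanishing (Definition \ref{vanishing}).

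The base case $m=0$ is immediate. $\Pi_1A=A$, and every element $a\in A$ is the value of the depth-$0$ polynomial $x$ at $a$.

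For the inclusion $(\supseteq)$ at step $m\ge1$, take $p$ with $\dep(p)\ge m$. Since $m\ge 1$, $p$ is not a variable or a constant, so $p=f(p_1,\dots,p_k)$ with $f\in F$ and some $p_j$ satisfying $\dep(p_j)\ge m-1$. By the induction hypothesis, $p_j(\oa)\in\Pi_mA$. The other arguments $p_i(\oa)$ lie in $A$. Hence
$$p(\oa)\in f(A,\dots,A,\Pi_mA,A,\dots,A)\subseteq\Pi_{m+1}A$$
by Lemma \ref{l:pi_n}. No unary fundamental operations occur in this section, so Lemma \ref{l:pi_n} applies to every $f$ that can appear.

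For the inclusion $(\subseteq)$, take $c\in\Pi_{m+1}A$. Lemma \ref{l:pi_n} gives $c=f(b_1,\dots,b_{j-1},d,b_{j+1},\dots,b_k)$ with $d\in\Pi_mA$. If $d=0$, then $c=0$ and we are done. Otherwise, the induction hypothesis gives $d=q(\oa)$ for some $q$ with $\dep(q)\ge m-1$. Then
$$p(\mathbf x)=f(b_1,\dots,b_{j-1},q(\mathbf x),b_{j+1},\dots,b_k)$$
has depth at least $m$ and satisfies $p(\oa)=c$.

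I expect the main difficulty to be bookkeeping rather than anything deep. First, the index shift between the number of brackets in Proposition \ref{p:n-nil} and the index of $\Pi$ must be tracked correctly. Second, the zero term and constant-only terms must be treated consistently with Definition \ref{d:depth}, so that the set in the claim is really the set of all values of depth-$\ge m$ polynomials. It may also help to note that the sets $\Pi_mA$ decrease in $m$, by \textbf{(ACI3)}. This is consistent with the theorem's phrase ``depth at least $n$'', although the induction above already handles arbitrary depth $\ge m$ directly.
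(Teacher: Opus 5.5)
Your proposal is correct and follows essentially the same route as the paper: it reduces the theorem via Proposition \ref{p:n-nil} to identifying $\Pi_{n+1}A$ with the set of values of polynomials of depth at least $n$, and proves both inclusions by induction, using Lemma \ref{l:pi_n} together with the decomposition $p=f(p_1,\dots,p_k)$ in which some $p_j$ has depth at least $m-1$. The differences are cosmetic. You run a single induction starting at depth $0$, and you quantify over all polynomial terms, with $0$ adjoined, rather than only over $\Pol^{*}_m(\bA)$; this matches the theorem's wording (all polynomials of depth at least $n$) a little more directly than the paper's equation does.
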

\begin{proof}
 We know that $\bA$ is $n$-nilpotent if and only if $\underbrace{[A,[A,\ldots[}_{n}A,A]\ldots]]=\{0\}$, by Proposition \ref{p:n-nil}. Since $\underbrace{[A,[A,\ldots[}_{n}A,A]\ldots]]=\Pi_{n+1}A$ by Definition \ref{d:Pi_n}, our aim is to prove that for $n\in\mathbb{N}$ the following holds
    \begin{equation}\label{e:nilp}
    \{p(A^{m}):m\in\mathbb{N},\ p\in\Pol^{*}_{m}(A),\ \dep(p)\geq n\}=\Pi_{n+1}A.
    \end{equation}

    We will prove $(\supseteq)$ using induction on $n$. For the basis $(n=1)$, note that for any $f\in F$ of arity $k$, the polynomial $f\tup{x}{1}{k}$ has depth $1$. Therefore, $f(A,\dots,A)\subseteq \{p(A^{m}):m\in\mathbb{N},\ p\in\Pol^{*}_{m}(A),\ \dep(p)\geq 1\}$. From Lemma \ref{l:G^2} follows that we have $\Pi_{2}A\subseteq \{p(A^{m}):m\in\mathbb{N},\ p\in\Pol^{*}_{m}(A),\ \dep(p)\geq 1\}$, so the basis holds. Now, let $n\geq 2$ and suppose that the inclusion $(\supseteq)$ in \eqref{e:nilp} holds for $n$. Let $f\in F$ with arity $k\geq 2$. For the induction step, it suffices to prove the inclusion $f(\Pi_{n} A,A,\dots,A)\subseteq \{p(A^{m}):m\in\mathbb{N},\ p\in\Pol^{*}_{m}(A),\ \dep(p)\geq n\}$, by Lemma \ref{l:pi_n}. Let  $f\tup{a}{1}{k}\in f(\Pi_{n} A,A,\dots,A)$, where $a_1\in\Pi_{n} A$ and $a_2,\dots,a_k\in A$. By the inductive hypothesis, we know that $\Pi_{n} A\subseteq\{p(A^{m}):m\in\mathbb{N},\ p\in\Pol^{*}_{m}(A),\ \dep(p)\geq n-1\}$. Hence, there exists a $q\in\Pol^{*}_{j}(A)$ for some $j\in\mathbb{N}$ such that $ \dep(q)\geq n-1$ and $b_1,\dots,b_j\in A$ with $a_1=q\tup{b}{1}{j}$. We define $p\tup{x}{1}{j+k-1}=f(q\tup{x}{1}{j}, x_{j+1},\dots,x_{j+k-1})\in\Pol^{*}_{j+k-1}(A)$, and we have $d(p)\geq n$, by Definition \ref{d:depth}. Thus,
    $$
    \begin{aligned}
    f\tup{a}{1}{k}&=f(q\tup{b}{1}{j},a_2,\dots,a_k)\\
    &\in\{p(A^{m}):m\in\mathbb{N},\ p\in\Pol^{*}_{m}(A),\ \dep(p)\geq n\}.
    \end{aligned}
    $$

    $(\subseteq)$ Again, we prove the inclusion by induction on $n$. Suppose $n=1$ and let $ p\in\Pol^{*}_{m}(A)$, $m\in\N$, with $\dep(p)\geq 1$. Then we have $p=f\tup{q}{1}{k}$ for a functional symbol $f$ of arity $k\geq 2$ and polynomials $q_1,\dots,q_k$. Since $q_1(A^m),\dots,q_k(A^m)\subseteq A$, we have
    $$p(A^m)=f(q_1(A^m),\dots,q_1(A^m))\subseteq f(A,\dots,A)\subseteq \Pi_{2}A.$$
    Now, let $n\geq 2$ and suppose that the inclusion $(\subseteq)$ in \eqref{e:nilp} is true for $n-1$. For $ p\in\Pol^{*}_{m}(A)$, $m\in\N$, such that $\dep(p)\geq n$ we have $p=f\tup{q}{1}{k}$ for a functional symbol $f$ of arity $k\geq 2$ and polynomials $q_1,\dots,q_k$. Here, at least one of $q_1,\dots,q_k$ has depth at least $n-1$, by Definition \ref{d:depth}. Without loss of generality, assume $\dep(q_1)\geq n-1$. By the induction hypothesis, we have $q_1(A^m)\subseteq \Pi_{n}A$. Hence, using $q_2(A^m),\dots,q_k(A^m)\subseteq A$, we obtain $p(A^m)=f(q_1(A^m),q_2(A^m),\dots,q_k(A^m))\subseteq f(\Pi_{n}A,A,\dots,A)\subseteq \Pi_{n+1}A$, by Lemma \ref{l:pi_n}. \qedhere

\end{proof}

We will conclude this section with an observation about the relationship between $2$-supernilpotency and $2$-nilpotency.

\begin{pro}
    Let $\bA$ be an algebra with zero such that all nonconstant fundamental operations are binary. Then, $\bA$ is 2-nilpotent if and only if it is 2-supernilpotent.
\end{pro}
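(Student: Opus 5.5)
One direction is already available: 2-supernilpotency implies 2-nilpotency by Proposition \ref{supernilpnilp}. So the real work is the converse. Assume $\bA$ is 2-nilpotent, with all nonconstant fundamental operations binary. By Theorem \ref{t:n-nilpot}, every polynomial whose term has depth at least $2$ is vanishing. By Theorem \ref{p:supernil}, it suffices to show that every $p\in\Pol^{*}_{3}(\bA)$ is vanishing.

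Take $p\in\Pol^{*}_3(\bA)$, written as a polynomial term containing all three variables $x_1,x_2,x_3$.

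\textbf{Depth of $p$.} We first show that $p$ must have depth at least $2$.
\begin{itemize}
\item A term of depth $0$ is a single variable or a constant, so it contains at most one variable.
\item A term of depth $1$ has the form $f(t_1,t_2)$ with $f$ binary and each $t_i$ a variable or constant, so it contains at most two variables.
\end{itemize}
Here we use that the only nonbinary fundamental operations are constants. Constants behave as depth-$0$ leaves, and there are no unary operations, by the standing assumption of this section. Hence a term containing three distinct variables has depth at least $2$. Theorem \ref{t:n-nilpot} then gives that $p$ is vanishing, and $\bA$ is 2-supernilpotent.

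\textbf{Ideal-level version.} If one prefers to argue with ideals (Proposition \ref{p:n-nil} and Lemma \ref{l:pi_n}), the same point reads as follows. Any $p(a_1,a_2,a_3)$ has the form $f(q_1,q_2)$, where some $q_i$ contains at least two variables and is therefore not a leaf. Then $q_i$ is itself an operation applied to elements, so its value lies in $\Pi_2A$, and hence $p(a_1,a_2,a_3)\in f(\Pi_2A,A)\cup f(A,\Pi_2A)=\Pi_3A=\{0\}$.

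\textbf{Main obstacle.} The only delicate point is the counting argument that a depth-$1$ term over binary operations has at most two variable occurrences. This is where the binarity hypothesis is used: with a ternary operation, $f(x_1,x_2,x_3)$ would have depth $1$ and contain three variables. One must also make sure that constants, being nullary fundamental operations, do not affect the depth count; this is consistent with Definition \ref{d:depth}, which assigns depth $0$ to constants.
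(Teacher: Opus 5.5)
Your proof is correct and follows the paper's argument. When all nonconstant operations are binary, any term containing all three variables has depth at least $2$, so Theorem~\ref{t:n-nilpot} makes every $p\in\Pol^{*}_{3}(\bA)$ vanishing, and Theorem~\ref{p:supernil} then gives 2-supernilpotency (in your ideal-level remark, $f(\Pi_2A,A)\cup f(A,\Pi_2A)$ is only contained in $\Pi_3A$, not equal to it, but containment is all you need).
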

\begin{proof}
   From Proposition \ref{supernilpnilp}, we know that $\bA$ being $2$-supernilpotent implies it is $2$-nilpotent as well. For the reverse implication, observe that any polynomial term with three or more variables has at least $2$ instances of fundamental operations. Hence, it has depth at least 2. Therefore, by Theorem \ref{t:n-nilpot} we have $p(A^{3})=\{0\}$ for all $p\in\Pol^{*}_{3}(A)$ and $\bA$ is $2$-supernilpotent by Theorem \ref{p:supernil}. 
\end{proof}

However, for $n\geq 3$, $n$-nilpotency and $n$-supernilpotency do not coincide, as we prove in the next section.

\section{Applications in groupoids with zero}

Due to the great number of cases and the complexity of polynomials in the general case, the problem of investigating solvability is difficult to tackle in an arbitrary algebra with zero. For this reason, we will focus on a specific type of such algebras.
Groupoids with zero are the simplest examples of algebras with zero. In the first part of this section, we apply the statements from the previous sections to obtain a characterisation of abelianness, as well as to investigate connections between $n$-nilpotency and $n$-supernilpotency. In the second part, we provide a characterisation for $n$-solvability.

\begin{de}
An algebra $\bG=(G,\cdot,0)$ is a \emph{grupoid with zero} if $(G,\cdot)$ is a grupoid and $0\in G$ is a constant such that $0\cdot g = g\cdot 0=0$ for all $g\in G$.
\end{de}

A groupoid $(G,\cdot,0)$ is a \textit{$0$-groupoid}, if $a\cdot b=0$ for any $a,b\in G$. Since abelian is the same as $1$-supernilpotent, using Proposition \ref{p:supernil}, we obtain the following statement.

\begin{cor}
    Let $\bG$ be a groupoid with zero. Then, $\bG$ is abelian if and only if it is a $0$-groupoid.
\end{cor}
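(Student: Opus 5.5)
We reduce the corollary to Theorem \ref{p:supernil} with $n=1$. By Definition \ref{d:Abel}, $\bG$ is abelian exactly when $[1_G,1_G]=0_G$. By Definition \ref{d:supernil}, this says precisely that $\bG$ is $1$-supernilpotent. Theorem \ref{p:supernil} then gives: $\bG$ is abelian if and only if every $p\in\Pol^{*}_{2}(\bG)$ is a vanishing polynomial. It remains to show that this polynomial condition is equivalent to $\bG$ being a $0$-groupoid.

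\emph{Abelian implies $0$-groupoid.} Take the binary polynomial $p(x_1,x_2)=x_1\cdot x_2$. Its term contains both variables, so $p\in\Pol^{*}_{2}(\bG)$. Hence $p$ is vanishing, that is, $a\cdot b=0$ for all $a,b\in G$, so $\bG$ is a $0$-groupoid.

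\emph{$0$-groupoid implies abelian.} Let $p\in\Pol^{*}_{2}(\bG)$. Because $p$ contains two variables, its polynomial term is not a single variable or constant. So it has the form $q_1\cdot q_2$ for some polynomials $q_1,q_2$. Therefore every value of $p$ is a product of two elements of $G$, which equals $0$ in a $0$-groupoid. Thus $p$ is vanishing, and Theorem \ref{p:supernil} gives that $\bG$ is abelian.

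An alternative route is Lemma \ref{l:G^2}, which gives $[G,G]=G\cdot G$ directly. Then $[G,G]=\{0\}$ is exactly the $0$-groupoid condition, and Proposition \ref{p:THEthm} with Corollary \ref{c:isomAlg} converts this into $[1_G,1_G]=\rho_{\{0\}}=0_G$.

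None of these steps is a real obstacle. The only point needing care is the observation that a polynomial term in two variables must have outermost symbol $\cdot$, which rules out the degenerate cases where the term is a single variable or constant.
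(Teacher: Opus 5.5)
Your proof is correct and follows the paper's route: the paper also notes that abelian means $1$-supernilpotent and applies Theorem \ref{p:supernil} with $n=1$. You additionally supply the check the paper leaves implicit, namely that every polynomial in $\Pol^{*}_{2}(\bG)$ is vanishing if and only if $x\cdot y$ is, because any such term has outermost symbol $\cdot$.
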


As usual, in a groupoid $(G,\cdot)$, for any two subsets $X$ and $Y$ of $G$ we denote $X\cdot Y=\{xy \mid x\in X,y\in Y\}$, and for any $g\in G$ we denote $gX=\{gx \mid x\in X\}$ and $Xg=\{xg \mid x\in X\}$.

\begin{pro}\label{3-supernilp}
    Let $\bG$ be a groupoid with zero $0$. Then, $\bG$ is 3-supernilpotent if and only if $\bG$ is 3-nilpotent and $(G^{2})^{2}=\{0\}$.
\end{pro}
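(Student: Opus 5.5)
We argue both directions through the polynomial characterisations already established: Theorem \ref{p:supernil} for $3$-supernilpotency (every $p\in\Pol^{*}_{4}(\bG)$ is vanishing) and Theorem \ref{t:n-nilpot} for $3$-nilpotency (every polynomial of depth at least $3$ is vanishing).

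For $(\Rightarrow)$, $3$-nilpotency follows at once from Proposition \ref{supernilpnilp}. For $(G^2)^2=\{0\}$, we use the polynomial $p(x_1,x_2,x_3,x_4)=(x_1x_2)(x_3x_4)$. It lies in $\Pol^{*}_{4}(\bG)$, so it is vanishing by Theorem \ref{p:supernil}. Every element of $(G^2)^2$ has the form $(ab)(cd)$, and these are exactly the values of $p$. Hence $(G^2)^2=\{0\}$.

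For $(\Leftarrow)$, let $p\in\Pol^{*}_{4}(\bG)$. We may assume that the term of $p$ is not the constant $0$ and contains no subterm $0$, since otherwise $p$ is trivially vanishing. We must show $p$ is vanishing.
\begin{itemize}
\item If $\dep(p)\geq 3$, then $p$ is vanishing by Theorem \ref{t:n-nilpot}.
\item Otherwise $\dep(p)\leq 2$. A term of depth at most $2$ in a binary operation has at most four leaves, namely the leaves of $(u_1u_2)(u_3u_4)$ or of a degenerate version with fewer products. Since all four variables must occur, $p$ must be exactly $(x_{\sigma(1)}x_{\sigma(2)})(x_{\sigma(3)}x_{\sigma(4)})$ for some permutation $\sigma$. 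Its values lie in $(G^2)^2=\{0\}$, so $p$ is vanishing.
\end{itemize}
Theorem \ref{p:supernil} then gives $3$-supernilpotency.

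The main care point is the counting claim that depth at most $2$ forces the term shape $(x x)(x x)$. We prove it by induction on depth: a binary term of depth $d$ has at most $2^d$ leaf occurrences, and equality at $d=2$ forces both children to be products of two leaves. With four distinct variables required and only four leaf slots, every leaf must be a variable, one each. Constants therefore cannot occupy any slot, so no extra cases arise.
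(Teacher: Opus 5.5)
Your proof is correct. The forward direction is the same as the paper's, but for the converse you take a different route.

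The paper fixes an arbitrary innermost product $(u\cdot v)$ inside the term of $p$ and lists six possible shapes for an enclosing subterm $s$: either $(uv)(p_1p_2)$ or $(p_1p_2)(uv)$, whose values lie in $(G^2)^2$, or one of four shapes of depth at least $3$, which vanish by Theorem~\ref{t:n-nilpot}. Proposition~\ref{polynomialzero} then propagates the value $0$ of $s$ up to $p$.

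You instead split globally on $\dep(p)$. The bound of $2^{\dep(p)}$ leaf occurrences shows that the only terms in $\Pol^{*}_{4}(\bG)$ of depth at most $2$ are $(x_{\sigma(1)}x_{\sigma(2)})(x_{\sigma(3)}x_{\sigma(4)})$, so no zero-propagation is needed.

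Your version has two advantages:
\begin{itemize}
\item It is shorter, and it makes explicit a counting fact the paper's case list uses only tacitly: $p$ has at least four leaves, so when the sibling of $(uv)$ is a leaf, a grandparent of $(uv)$ still exists.
\item It is exactly the $n=3$ instance of the paper's later remark that $n$-nilpotency and $n$-supernilpotency differ only on terms of depth $d$ with $\log_2(n+1)\le d<n$.
\end{itemize}
The paper's local argument has the form ``find a vanishing pattern inside the term and let $0$ absorb the rest.'' That form is reused in Example~\ref{ex:2sol} and extends to situations where no depth bound pins down the term's shape.

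Your preliminary exclusion of subterms equal to $0$ is harmless but unnecessary. In the depth-$\le 2$ case all four leaves are forced to be variables, and in the depth-$\ge 3$ case Theorem~\ref{t:n-nilpot} applies regardless.
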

\begin{proof}  $(\Rightarrow)$ From Proposition \ref{supernilpnilp} we know that $\bG$ is $3$-nilpotent, and from Theorem \ref{p:supernil} we may conclude that the polynomial $(xy)(zt)\in\Pol^{*}_{4}(G)$ is vanishing. The latter implies $(G^{2})^{2}=\{0\}$. \newline
$(\Leftarrow)$ Suppose that $\bG$ is 3-nilpotent and $(G^{2})^{2}=\{0\}$. By Theorem \ref{t:n-nilpot}, the first part means that
$\{p(G^{m}):m\in\mathbb{N},p\in\Pol^{*}_{m}(G), \dep(p)\geq 3\}=\{0\}.$
Now, let $p\in\Pol^{*}_{4}(G)$ and choose an arbitrary subterm of the form $(u\cdot v)$ in it (where both \(u\) and \(v\) can be a variable or a constant). Then $(u\cdot v)$ is contained in a subterm $s$ that can be classified into one of the following categories, according to its form:
$$
\begin{aligned}
(u\cdot v)\cdot(p_{1}\cdot p_{2}) &\text{\ \ or\ \ }(p_{1}\cdot p_{2})\cdot(u\cdot v) \text{\ \ or\ \ }\\
p_{1}(p_{2}(u\cdot v)) \text{\ \ or\ \ }p_{1}((u\cdot v)p_{2}) &\text{\ \ or\ \ } (p_{1}(u\cdot v))p_{2} \text{\ \ or\ \ } ((u\cdot v)p_{1})p_{2},
\end{aligned}
$$
where $p_{1}$ and $p_{2}$ are non-empty subterms. Note that the value of the first two belongs to $(G^{2})^{2}$, while the rest have depth at least $3$. Thus, the value of the subterm $s$ is $0$ for any evaluation of $p$. Now, Proposition \ref{polynomialzero} implies that the value of $p$ is always $0$. Since $p$ was chosen as an arbitrary polynomial from $\Pol^{*}_{4}(G)$, we have proved that all polynomials from $\Pol^{*}_{4}(G)$ are vanishing. So, by Theorem \ref{p:supernil}, $\bG$ is 3-supernilpotent.
\end{proof}

Note that the 'difference' between $n$-nilpotency and $n$-supernilpotency lies in the polynomials whose depth $d$ satisfies $\log_{2}(n+1)\leq d<n$. Since the number and complexity of such polynomials increase with $n$, for $n\geq 4$ it becomes too cumbersome for an elegant statement.

\begin{exa}\label{counterexample} For $n\geq 3$, we construct a groupoid $\mathbf{H}_{n}$ which is $n$-nilpotent, but not $n$-supernilpotent. Let $H_{n}=\{0\}\cup\{a_{i}:1\leq i\leq 2^{n}-1\}$ and $\mathbf{H}_{n}=(H_{n},\cdot)$, with $\cdot$ defined by the following rules: for $x,y\in H_{n}$, we have $x\cdot y=0$, except in the following cases:
$$a_{2i}\cdot a_{2i+1}=a_{i}, \text{ for } 1\leq i\leq 2^{n-1}-1.$$
Then, we have
$$
\begin{aligned}
    a_{2^{n-1}}\cdot a_{2^{n-1}+1}&=a_{2^{n-2}}\\
    a_{2^{n-1}+2}\cdot a_{2^{n-1}+3}&=a_{2^{n-2}+1}\\
    &\vdots\\
    a_{2^{n}-4}\cdot a_{2^{n}-3}&=a_{2^{n-1}-2}\\
    a_{2^{n}-2}\cdot a_{2^{n}-1}&=a_{2^{n-1}-1}.
\end{aligned}
$$
If we multiply the pairs of terms on the left-hand side two by two, we obtain
$$
\begin{aligned}
    (a_{2^{n-1}}\cdot a_{2^{n-1}+1})\cdot (a_{2^{n-1}+2}\cdot a_{2^{n-1}+3})&=a_{2^{n-3}}\\
    &\vdots\\
    (a_{2^{n}-4}\cdot a_{2^{n}-3})\cdot(a_{2^{n}-2}\cdot a_{2^{n}-1})&=a_{2^{n-2}-1}.\\
\end{aligned}
$$
Continuing the process of multiplying terms on the left two by two iteratively, after $n-1$ steps, we obtain a single term
whose value is $a_{1}$. In Figure \ref{f:H4}, we have illustrated the process for $\mathbf{H}_{4}$. Thus, we have a term of depth $n-1$ with $(2^{n}-1)-(2^{n-1}-1)=2^{n-1}$ elements, whose value is non-zero. For $n\geq 3$, we have $2^{n-1}\geq n+1$, so by Theorem \ref{p:supernil}, the groupoid $\mathbf{H}_{n}$ is not $n$-supernilpotent. However, we will prove that it is $n$-nilpotent. First, we prove that in $\mathbf{H}_{n}$ the result of any product of depth $i$, for $1\leq i\leq n$, lies in the set $\{0,a_{1},\ldots,a_{2^{n-i}-1}\}$ (for $i=n$ the set will simply be $\{0\}$). We use mathematical induction on $i$. For the basis, note that the set of all elements that can be obtained as a product of two elements in $\mathbf{H}_{n}$ is exactly $\{0,a_{1},\ldots,a_{2^{n-1}-1}\}$. Suppose now that any product of depth $i$ lies in the set $\{0,a_{1},\ldots,a_{2^{n-i}-1}\}$. Note that any product of depth $i+1$ necessarily contains a subproduct of depth $i$. Thus, by the inductive hypothesis, the result of a product of depth $i+1$ can be obtained as a product of an element from $\{0,a_{1},\ldots,a_{2^{n-i}-1}\}$ and another element from $H_{n}$. The set of all such elements is $\{0,a_{1},\ldots,a_{2^{n-(i+1)}-1}\}$. Therefore, we have proved that the set of all results of products of depth $n$ is $\{0\}$, so Theorem \ref{t:n-nilpot} implies that $\mathbf{H}_{n}$ is $n$-nilpotent.
\end{exa}

\begin{figure}
  \centering
     \begin{tikzpicture}[auto,
     block/.style ={circle, draw=black, text width=1.75em,align=center}
      ]
      \draw (-0.25,3) node[block] (1) {$\cdot$};
      \draw (-3.25,1.5) node[block] (2) {$\cdot$};
      \draw (2.75,1.5) node[block] (3) {$\cdot$};
      \draw (-4.75,0) node[block] (4) {$\cdot$};
      \draw (-1.75,0) node[block] (5) {$\cdot$};
      \draw (1.25,0) node[block] (6) {$\cdot$};
      \draw (4.25,0) node[block] (7) {$\cdot$};
      \draw (-5.5,-1.5) node[block] (8) {$a_{8}$};
      \draw (-4,-1.5) node[block] (9) {$a_{9}$};
            \draw (-2.5,-1.5) node[block] (10) {$a_{10}$};
      \draw (-1,-1.5) node[block] (11) {$a_{11}$};
            \draw (5.0,-1.5) node[block] (12) {$a_{15}$};
      \draw (3.5,-1.5) node[block] (13) {$a_{14}$};
            \draw (2.0,-1.5) node[block] (14) {$a_{13}$};
      \draw (0.5,-1.5) node[block] (15) {$a_{12}$};
      \draw (1)--(2)--(4)--(8)
            (4)--(9)
            (2)--(5)--(10)
            (3)--(7)--(13)
            (1)--(3)--(6)--(15)
            (6)--(14)
            (7)--(12)
            (5)--(11);
 \end{tikzpicture}
 \caption{The only non-zero product of depth $3$ in $\mathbf{H}_{4}$.}
 \label{f:H4}
\end{figure}

In Proposition \ref{t:n-sol}, we state the condition for $n$-solvability of an algebra with zero in terms of the set $[A]^{(n)}$. To provide a precise (inductive) description of the set $[G]^{(n)}$ and a more convenient characterisation of solvability for a groupoid with zero, we introduce two series of sets.

\begin{de} \label{d:Cseries}
In a groupoid $\bG$, for a set $X\subseteq G$, we define
\begin{equation*}
\begin{aligned}
    C_{0}(X)&=X\\
    C_{n+1}(X)&=G\cdot C_{n}(X) \cup C_{n}(X)\cdot G, \text{ for }n\geq 0.
\end{aligned}
\end{equation*}
\end{de}

Note that $ C_{1}(X)=G\cdot X \cup X\cdot G$ for any set $X$, and therefore we have the following statement.

\begin{pro}\label{inductiveC}
In a groupoid $\bG$, for a set $X\subseteq G$, we have $C_{n+1}(X)=C_1(C_n(X))$ for all $n\in\N_0$.
\end{pro}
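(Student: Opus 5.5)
The statement to prove is $C_{n+1}(X)=C_1(C_n(X))$ for every $n\in\N_0$. I would argue by induction on $n$, though in fact no real induction is needed. Everything follows by unwinding Definition \ref{d:Cseries} and the observation, made just before the statement, that $C_1(Y)=G\cdot Y\cup Y\cdot G$ for \emph{every} subset $Y\subseteq G$.

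First I record that observation. Apply Definition \ref{d:Cseries} with $n=0$ to an arbitrary set $Y$ in place of $X$. Since $C_0(Y)=Y$, we get $C_1(Y)=G\cdot C_0(Y)\cup C_0(Y)\cdot G=G\cdot Y\cup Y\cdot G$.

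Next, fix $n\in\N_0$ and put $Y=C_n(X)$, which is a subset of $G$. By the recursive clause of Definition \ref{d:Cseries},
$$C_{n+1}(X)=G\cdot C_n(X)\cup C_n(X)\cdot G=G\cdot Y\cup Y\cdot G.$$
By the observation above, the right-hand side equals $C_1(Y)=C_1(C_n(X))$, which is the claim. For $n=0$ this reads $C_1(X)=C_1(C_0(X))=C_1(X)$, which is trivially true.

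The only point needing care is that $C_1$ must be read as an operator on arbitrary subsets of $G$, not only on the fixed set $X$. Once the definition is applied with $C_n(X)$ as the input set, there is no genuine obstacle; the proof is a direct substitution. If one wanted the iterated form $C_{m+n}(X)=C_m(C_n(X))$, it would follow from this statement by a straightforward induction on $m$.
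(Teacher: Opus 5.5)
Your proposal is correct and is the paper's own argument: the paper justifies the statement only by observing that $C_1(Y)=G\cdot Y\cup Y\cdot G$ for every subset $Y\subseteq G$. Substituting $Y=C_n(X)$ into the recursive clause of Definition~\ref{d:Cseries} then gives $C_{n+1}(X)=C_1(C_n(X))$ directly, with no induction needed, just as you do.
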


\begin{de} \label{d:S}
In a groupoid $\bG$, we define
\begin{equation*}
\begin{aligned}
    S_{1}G&=G\\
    S_{2}G&=G^{2}\\
    S_{n+1}G&=\bigcup_{k\in\mathbb{N}_0}C_{k}((S_{n}G)\cdot(S_{n}G)), \text{ for }n\geq2.
\end{aligned}
\end{equation*}
\end{de}

To prove Proposition \ref{p:nsolGroupoid}, which provides a criterion for checking $n$-solvability, we need to prove the following two statements about the properties of these two series.

\begin{lem}\label{CSnG}
 In a groupoid $\bG$, we have $C_m(S_nG)\subseteq S_nG$ for any $n\in\N$ and any $m\in\N_0$.
\end{lem}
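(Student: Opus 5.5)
The plan is to reduce the statement to the single case $m=1$ and then argue by cases on $n$. By Proposition \ref{inductiveC}, $C_{m+1}(X)=C_1(C_m(X))$, and $C_1$ is monotone: if $X\subseteq Y$ then $C_1(X)\subseteq C_1(Y)$. Suppose we know $C_1(S_nG)\subseteq S_nG$. Then an induction on $m$ gives the general claim. The case $m=0$ is trivial, since $C_0(S_nG)=S_nG$. For the step, the inductive hypothesis $C_m(S_nG)\subseteq S_nG$ together with monotonicity gives
$$C_{m+1}(S_nG)=C_1(C_m(S_nG))\subseteq C_1(S_nG)\subseteq S_nG.$$
So it suffices to prove $G\cdot S_nG\cup S_nG\cdot G\subseteq S_nG$ for every $n\in\N$.

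The cases $n=1$ and $n=2$ are immediate from Definition \ref{d:S}.
\begin{itemize}
\item For $n=1$, $C_1(G)=G\cdot G\cup G\cdot G\subseteq G$.
\item For $n=2$, we have $S_2G=G^2$. Any element of $G\cdot G^2$ or $G^2\cdot G$ is a product of two elements of $G$, so it lies in $G^2$.
\end{itemize}

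For $n\geq 2$, write $X=(S_nG)\cdot(S_nG)$, so that $S_{n+1}G=\bigcup_{k\in\N_0}C_k(X)$. Then
$$C_1(S_{n+1}G)=\bigcup_{k}\bigl(G\cdot C_k(X)\cup C_k(X)\cdot G\bigr)=\bigcup_{k}C_{k+1}(X)\subseteq S_{n+1}G.$$
The first equality uses that $C_1$ distributes over arbitrary unions, because a product $g\cdot y$ with $y$ in a union lies in one of the members. The second equality is Definition \ref{d:Cseries}.

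This covers every $n\in\N$, namely $n=1$, $n=2$, and every $n+1\geq 3$. It needs no induction on $n$, since each $S_{n+1}G$ with $n\geq2$ is closed under $C_1$ by its very form. The only point requiring care is the distributivity of $C_1$ over the union defining $S_{n+1}G$, and that is routine, so I expect no real obstacle.
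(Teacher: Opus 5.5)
Your proof is correct and takes essentially the same route as the paper's. You use induction on $m$ via $C_{m+1}(X)=C_1(C_m(X))$ and monotonicity of $C_1$ to reduce to $C_1(S_nG)\subseteq S_nG$, check this directly for $n=1,2$, and for $n\geq 3$ distribute $C_1$ over the union $\bigcup_{k} C_k((S_{n-1}G)\cdot(S_{n-1}G))$. The only difference is organizational: you make the reduction to $m=1$ once up front, whereas the paper runs the $m$-induction inside the cases $n=2$ and $n\geq 3$.
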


\begin{proof}
For $n=1$, we prove the inclusion directly using the fact that $C_m(X)\subseteq G$ for all $X\subseteq G$ and all $m\in\mathbb{N}$. Let $n\geq 2$. We use induction on $m$. For $m=0$ we have $C_0(S_nG)=S_nG$, by Definition \ref{d:Cseries}. Let $m\geq 1$ and suppose that $C_m(S_nG)\subseteq S_nG$. For $n=2$, from Lemma \ref{inductiveC}, inductive hypothesis, and Definitions \ref{d:Cseries} and \ref{d:S}, we have $C_{m+1}(S_2G)=C_1(C_m(S_2G))\subseteq C_1(S_2G)=C_1(G^2)=G\cdot G^2\cup G^2\cdot G\subseteq G\cdot G=G^2=S_2G$, using the fact $G^2\subseteq G$.
Finally, for $n\geq 3$, using the same statements and definitions, we obtain
$$
\begin{aligned}
C_{m+1}(S_nG)&=C_1(C_m(S_nG))\subseteq C_1(S_nG)\\
&=C_1(\cup_{k\in\N_0}C_k((S_{n-1}G)\cdot(S_{n-1}G))\\
&=\cup_{k\in\N_0}C_1(C_k((S_{n-1}G)\cdot (S_{n-1}G)))\\
&=\cup_{k\in\N_0}C_{k+1}((S_{n-1}G)\cdot(S_{n-1}G))\subseteq S_nG.
\end{aligned}
$$
\end{proof}

\begin{cor}\label{SnGideal}
  Let $n\in\N$. For a groupoid $\bG$, the set $S_nG$ is an ideal of $\bG$.
\end{cor}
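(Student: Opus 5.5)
The plan is to check Definition~\ref{Ideal} directly for the single binary operation $\cdot$ of $\bG$, and to get the closure property from Lemma~\ref{CSnG} with $m=1$. I take $\bG$ to be a groupoid with zero, as everywhere in this section. Then the constant $0$ is a nullary operation, and Definition~\ref{Ideal} imposes no condition on it.

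First, I will show that $S_nG\neq\emptyset$. The cleanest route is to show $0\in S_nG$ by induction on $n$.
\begin{itemize}
\item For $n=1$, we have $S_1G=G\ni 0$.
\item For $n=2$, we have $0=0\cdot 0\in G^2=S_2G$.
\item For $n\geq 3$, suppose $0\in S_{n-1}G$. Then $0=0\cdot 0\in (S_{n-1}G)\cdot(S_{n-1}G)=C_0((S_{n-1}G)\cdot(S_{n-1}G))\subseteq S_nG$ by Definition~\ref{d:S}.
\end{itemize}
If one prefers not to rely on $0$, non-emptiness also follows from $G\neq\emptyset$ by the same induction. The sets $S_nG$ are nonempty products, and $C_0$ of a nonempty set is nonempty.

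Second, I verify the ideal condition for $\cdot$. This requires that $g\cdot s\in S_nG$ and $s\cdot g\in S_nG$ whenever $s\in S_nG$ and $g\in G$. Both elements lie in $G\cdot S_nG\cup S_nG\cdot G=C_1(S_nG)$ by Definition~\ref{d:Cseries}. Lemma~\ref{CSnG} with $m=1$ gives $C_1(S_nG)\subseteq S_nG$, which is exactly the required closure.

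There is no real obstacle here, since Lemma~\ref{CSnG} already does the work. The only point needing care is non-emptiness, which is settled by the short induction above.
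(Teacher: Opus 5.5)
Your proof is correct and follows the paper's argument: closure under multiplication is exactly Lemma~\ref{CSnG} with $m=1$, since $g\cdot s,\ s\cdot g\in C_1(S_nG)$. The only difference is that you justify non-emptiness with a short induction, via $0\in S_nG$ or via $G\neq\emptyset$ for an arbitrary groupoid, where the paper simply calls it obvious.
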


\begin{proof}
Obviously, $\emptyset\neq S_nG\subseteq G$. For $g\in G$ and $a\in S_nG$ we have $ag,ga\in G(S_nG)\cup(S_nG)G=C_1(S_nG)\subseteq S_nG$ by Lemma \ref{CSnG}.
\end{proof}

Finally, we may prove the following statement.
\begin{pro}\label{p:nsolGroupoid}
Let $\bG$ be a groupoid with zero $0$ and let $n\geq 1$. Then, $\bG$ is $n$-solvable if and only if $S_{n+1}G=\{0\}$.
\end{pro}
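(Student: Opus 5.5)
By Proposition \ref{t:n-sol}, $\bG$ is $n$-solvable if and only if $[G]^{(n)}=\{0\}$. It is therefore enough to show that
$$[G]^{(n)}=S_{n+1}G \quad\text{for all } n\geq 1,$$
and I would prove this by induction on $n$.

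\emph{Base case.} For $n=1$, Lemma \ref{l:G^2} gives $[G]^{(1)}=[G,G]=G\cdot G=G^2=S_2G$.

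\emph{Key lemma.} For the induction step, I would first describe the binary commutator $[I,I]$ of an arbitrary ideal $I$ of a groupoid with zero. The claim is
$$[I,I]=\bigcup_{k\in\N_0}C_k(I\cdot I).$$
Once this holds, the step is immediate. If $[G]^{(n)}=S_{n+1}G$, then
$$[G]^{(n+1)}=[S_{n+1}G,S_{n+1}G]=\bigcup_{k}C_k\bigl((S_{n+1}G)\cdot(S_{n+1}G)\bigr)=S_{n+2}G,$$
since $n+1\geq 2$. Corollary \ref{SnGideal} guarantees that $S_{n+1}G$ is an ideal, so the key lemma applies.

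\emph{Proof of the key lemma, inclusion $\supseteq$.} Take $x=i\cdot j$ with $i,j\in I$. It equals $p(i,j)$ for $p(x_1,x_2)=x_1x_2\in\Pol^*_2(\bG)$, so $I\cdot I\subseteq[I,I]$. Since $[I,I]$ is an ideal by Proposition \ref{p:comm_is_ideal}, we have $C_1([I,I])\subseteq[I,I]$. By Proposition \ref{inductiveC} and induction on $k$, this gives $C_k(I\cdot I)\subseteq[I,I]$ for every $k$.

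\emph{Proof of the key lemma, inclusion $\subseteq$ (main obstacle).} Take $c=p(i,j)$ with $p\in\Pol^*_2(\bG)$ and $i,j\in I$. The plan is to locate a useful subterm in the term tree of $p$.
\begin{enumerate}
\item Choose the smallest subterm $s=s_1\cdot s_2$ of $p$ that contains both variables $x_1$ and $x_2$. Such a subterm exists because $p$ contains both variables. By minimality, one factor contains $x_1$ and not $x_2$, and the other contains $x_2$ and not $x_1$ (a factor cannot contain both, or $s$ would not be minimal).
\item Each factor is therefore a unary polynomial in $\Pol^*_1$ applied to $i$ or to $j$. By Proposition \ref{idealsandpolynomials}, its value lies in $I$. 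Hence the value of $s$ lies in $I\cdot I$.
\item Going from $s$ up to the root of $p$, each step multiplies the current subterm on the left or on the right by an element of $G$. This is because the sibling subterm evaluates to some element of $G$; it may contain variables or constants, which is harmless.
\item If the path from $s$ to the root has length $k$, the value of $p$ lies in $C_k(I\cdot I)$ by Definition \ref{d:Cseries}.
\end{enumerate}
This argument holds even if a sibling subterm also contains variables: $x_1$ and $x_2$ may occur several times in $p$, and we just fix one minimal subterm containing both. The only care needed is to make that choice precise, for instance by taking any subterm containing both variables whose two factors do not each contain both. If instead one factor alone contains both variables, we descend into that factor and repeat.

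Combining the key lemma with the induction gives $[G]^{(n)}=S_{n+1}G$. Then $\bG$ is $n$-solvable if and only if $S_{n+1}G=\{0\}$, which proves Proposition \ref{p:nsolGroupoid}.
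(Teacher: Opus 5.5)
Your proof is correct and takes the same route as the paper. You prove $[G]^{(n)}=S_{n+1}G$ by induction on $n$, and the inductive step reduces to identifying $[S_{n+1}G,S_{n+1}G]$ with $\bigcup_{k}C_k\bigl((S_{n+1}G)\cdot(S_{n+1}G)\bigr)$, using that $S_{n+1}G$ is an ideal.

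The differences are only in packaging. You state the key identity for an arbitrary ideal $I$. You get $C_k(I\cdot I)\subseteq[I,I]$ from Proposition \ref{p:comm_is_ideal} rather than re-checking closure under left and right multiplication. Where the paper inducts on the number of occurrences of $\cdot$, you fix an explicit minimal subterm containing both variables and follow its path to the root; the paper's case split on $p=qr$ is exactly your descent into the factor that contains both variables.
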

\begin{proof} We will prove by induction on $n$ that $[G]^{(n)}=S_{n+1}G$ for $n\in\mathbb{N}$. Then the statement follows directly from Theorem \ref{t:n-sol}. For the basis, note that Lemma \ref{l:G^2} and Definition \ref{d:S} imply $[G]^{(1)}=[G,G]=G^{2}=S_{2}G$. Now, suppose that $[G]^{(n)}=S_{n+1}G$ holds for some $n\geq 1$ and let us prove the statement for $n+1$. By Definition \ref{d:P_n} and the inductive hypothesis, we have
$$
\begin{aligned}
\relax [G]^{(n+1)}&=[[G]^{(n)},[G]^{(n)}]\\
&=[S_{n+1}G,S_{n+1}G]=\{p(g_{1},g_{2}):p\in\Pol^{*}_{2}(G),\ g_{1},g_{2}\in S_{n+1}G\}.
\end{aligned}
$$
Since $S_{n+2}G=\cup_{k\in\mathbb{N}_{0}}C_{k}((S_{n+1}G)\cdot(S_{n+1}G))$, our goal is to prove
\begin{equation*}
\cup_{k\in\mathbb{N}_{0}}C_{k}((S_{n+1}G)\cdot(S_{n+1}G))=\{p(g_{1},g_{2}):p\in\Pol^{*}_{2}(G),\ g_{1},g_{2}\in S_{n+1}G\}.
\end{equation*}

To show $(\subseteq)$, we prove that
$$
C_{k}((S_{n+1}G)\cdot(S_{n+1}G))\subseteq\{p(g_{1},g_{2}):p\in\Pol^{*}_{2}(G),\ g_{1},g_{2}\in S_{n+1}G\},
$$
for all $k\in\N_0$, by induction on $k$. Since $x\cdot y\in\Pol^{*}_{2}(G)$, for any $g\in C_{0}((S_{n+1}G)\cdot(S_{n+1}G))=(S_{n+1}G)\cdot(S_{n+1}G)$ we have $g=g_{1}\cdot g_{2}$ for $g_{1},g_{2}\in S_{n+1}G$, so $g\in\{p(g_{1},g_{2}):p\in\Pol^{*}_{2}(G),\ g_{1},g_{2}\in S_{n+1}G\}$. Therefore, the base case $(k=0)$ is true. Now, suppose that the statement is true for some $k\in\N_0$ and let us prove it for $k+1$.  By Proposition \ref{inductiveC}, we have $C_{k+1}((S_{n+1}G)\cdot(S_{n+1}G))=C_1(C_{k}((S_{n+1}G)\cdot(S_{n+1}G)))$. Hence, $C_{k+1}((S_{n+1}G)\cdot(S_{n+1}G))\subseteq C_1(\{p(g_{1},g_{2}):p\in\Pol^{*}_{2}(G),\ g_{1},g_{2}\in S_{n+1}G\})$, by the inductive hypothesis. Note that for any $f(x,y)\in\Pol^{*}_{2}(G)$ and for any $a\in G$, we have $af(x,y), f(x,y)a\in\Pol^{*}_{2}(G)$. Therefore
$$
\begin{aligned}
&G\cdot\{p(g_{1},g_{2}):p\in\Pol^{*}_{2}(G),\ g_{1},g_{2}\in S_{n+1}G\}\ \cup \\
&\{p(g_{1},g_{2}):p\in\Pol^{*}_{2}(G),\ g_{1},g_{2}\in S_{n+1}G\}\cdot G\\
&\subseteq\{p(g_{1},g_{2}):p\in\Pol^{*}_{2}(G),\ g_{1},g_{2}\in S_{n+1}G\}.
\end{aligned}
$$
Hence, we obtain $C_1(\{p(g_{1},g_{2}):p\in\Pol^{*}_{2}(G),\ g_{1},g_{2}\in S_{n+1}G\})\subseteq\{p(g_{1},g_{2}):p\in\Pol^{*}_{2}(G),\ g_{1},g_{2}\in S_{n+1}G\}$. Thus, we have proved the induction step.

$(\supseteq)$ We show this by induction on $m$, the number of instances of the symbol $\cdot$ in the polynomial term of $p\in\Pol^{*}_{2}(G)$. For $m=1$, we only have the polynomial term $p(x,y)=x\cdot y\in\Pol^{*}_{2}(G)$ and hence $p(g_{1},g_{2})=g_1g_2\in(S_{n+1}G)\cdot(S_{n+1}G)=C_{0}((S_{n+1}G)\cdot(S_{n+1}G))$ for $g_{1},g_{2}\in S_{n+1}G$. Suppose that the statement holds for any polynomial term of complexity smaller than or equal to $m$, and let us prove it for $p\in\Pol^{*}_{2}(G)$ whose term has $m+1$ instances of $\cdot$. Then $p(x,y)=q(x,y)r(x,y)$, where $q$ and $r$ have at most $m$ occurrences of $\cdot$ in their polynomial terms. We will divide the proof into three cases. If $q(x,y)\in\Pol^{*}_{2}(G)$, then there exists a $k\in\N_0$ such that $q(g_1,g_2)\in C_k((S_{n+1}G)\cdot(S_{n+1}G))$ by the induction hypothesis. Now,
$$
\begin{aligned}
p(g_1,g_2)=q(g_1,g_2)r(g_1,g_2)&\in C_k((S_{n+1}G)\cdot(S_{n+1}G))r(g_1,g_2)\\
&\subseteq C_k((S_{n+1}G)\cdot(S_{n+1}G))G\\
&\subseteq C_{k+1}((S_{n+1}G)\cdot(S_{n+1}G)).
\end{aligned}$$
The case where $r(x,y)\in\Pol^{*}_{2}(G)$ is analogous. So, the remaining case is $q(x,y),r(x,y)\not\in\Pol^{*}_{2}(G)$. Without loss of generality, we assume that $x$ appears in the polynomial term of $q$ and $y$ appears in the polynomial term of $r$. Now, $q(g_1,g_2),r(g_1,g_2)\in S_{n+1}G$ by Proposition \ref{idealsandpolynomials}, because $g_1,g_2\in S_{n+1}G$ and $S_{n+1}G$ is an ideal by Corollary \ref{SnGideal} . Hence $p(g_1,g_2)\in (S_{n+1}G)\cdot(S_{n+1}G)\subseteq\cup_{k\in\mathbb{N}}C_{k}((S_{n+1}G)(S_{n+1}G))$. Thus, we have proved the inclusion in all three cases, which concludes the proof of the induction step.\qedhere
\end{proof}

Using the previous result, we will prove a more nuanced version of Proposition \ref{3-supernilp}.

\begin{cor}
Let $\bG$ be a groupoid with zero. Then $\bG$ is 3-supernilpotent if and only if $\bG$ is 3-nilpotent and $2$-solvable.
\end{cor}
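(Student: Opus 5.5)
By Proposition \ref{3-supernilp}, $\bG$ is 3-supernilpotent if and only if $\bG$ is 3-nilpotent and $(G^{2})^{2}=\{0\}$. So it suffices to show that, for a 3-nilpotent groupoid with zero, $(G^2)^2=\{0\}$ is equivalent to 2-solvability. By Proposition \ref{p:nsolGroupoid}, 2-solvability means $S_3G=\{0\}$. By Definition \ref{d:S}, $S_3G=\bigcup_{k\in\N_0}C_k(G^2\cdot G^2)$, where $G^2\cdot G^2=(G^2)^2$. So the task is to compare $\bigcup_k C_k((G^2)^2)$ with $(G^2)^2$ under the assumption of 3-nilpotency.

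One direction is immediate: $(G^2)^2=C_0((G^2)^2)\subseteq S_3G$. Hence if $\bG$ is 2-solvable, then $(G^2)^2=\{0\}$. Together with Proposition \ref{3-supernilp}, this shows that 3-nilpotent and 2-solvable implies 3-supernilpotent.

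For the converse, assume $\bG$ is 3-supernilpotent. Proposition \ref{3-supernilp} then gives $(G^2)^2=\{0\}$ and 3-nilpotency. Since $C_k(\{0\})=\{0\}$ for all $k$, because $0$ is absorbing, we get $S_3G=\bigcup_k C_k(\{0\})=\{0\}$. So $\bG$ is 2-solvable by Proposition \ref{p:nsolGroupoid}. As an alternative check, one could use Theorem \ref{p:supernil} directly: every element of $C_k((G^2)^2)$ is the value of a polynomial in $\Pol^{*}_{4}(G)$ containing the subterm $(xy)(zt)$, which vanishes.

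There is no genuine obstacle here. The only point needing care is verifying $C_k(\{0\})=\{0\}$, which holds by induction using $g0=0g=0$. The 3-nilpotency hypothesis is used only through Proposition \ref{3-supernilp}.
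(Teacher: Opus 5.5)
Your proof is correct and matches the paper's argument: both reduce, via Proposition \ref{3-supernilp}, to showing $(G^2)^2=\{0\}$ if and only if $S_3G=\{0\}$. Both then use $(G^2)^2=C_0((G^2)^2)\subseteq S_3G$ for one direction and $C_k(\{0\})=\{0\}$ for the other, before invoking Proposition \ref{p:nsolGroupoid}.
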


\begin{proof}
From Proposition \ref{3-supernilp} we know that $\bG$ is $3$-supernilpotent if and only if it is $3$-nilpotent and $(G^2)^2=\{0\}$. By Definition \ref{d:S} we have $(S_2G)\cdot(S_2G)=\{0\}$ and, therefore, $C_k((S_2G)\cdot(S_2G))=\{0\}$ for all $k\in\N_0$. Thus, $(G^2)^2=\{0\}$ holds if and only if $S_3G=\{0\}$, which is equivalent to $\bG$ being $2$-solvable, by Proposition \ref{p:nsolGroupoid}.
\end{proof}

We will provide a more operable description of the set $S_{n+1} G$, which will be the foundation for the main result of this section, Theorem \ref{t:nsolGroupoid}. To do this, we need to select polynomial terms of a specific form and examine some of their properties.



\begin{de}\label{d:round_pol} We say that a polynomial term of form $x\cdot y$ is a \textit{round polynomial term of order 1} (the variables may coincide). A polynomial term in a groupoid $\mathbf{G}$ is \textit{round of order} $k\geq 2$ if it can be obtained by finitely many applications of the following two rules:
\begin{enumerate}
    \item if $p$ is a round polynomial term of order $k\geq 2$, then $(c\cdot p)$ and $(p\cdot c)$ are round polynomial terms of order $k$ for any constant $c$;
    \item if $p$ and $q$ are round polynomial terms of order $k\geq 1$, then $(p\cdot q)$ is a round polynomial term of order $k+1$.
\end{enumerate}
As usual, we omit the outer braces of a term for the sake of simplicity. In addition, we omit the operation symbol unless it provides better readability of the term.
\end{de}

\begin{exa}
    The polynomial term $a((b((x_{1}x_{2})(x_{3}x_{4})))c)$ is a round polynomial term of order $2$ and
    $\Big{(}(a((x_{1}x_{3})(x_{2}x_{4})))\cdot(((x_{5}x_{7})(x_{8}x_{6}))c)\Big{)}b$
    is a round polynomial term of order $3$.
\end{exa}

\begin{pro} A round polynomial term of order $n\geq 1$ contains $2^{n}$ variables (counting the number of occurrences of variables).
\end{pro}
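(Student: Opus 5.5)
We argue by induction on the order $n$, following the inductive generation of round polynomial terms in Definition~\ref{d:round_pol}. Write $v(p)$ for the number of occurrences of variables in a polynomial term $p$, so that $v(c)=0$ for a constant $c$, $v(x)=1$ for a variable $x$, and $v(p\cdot q)=v(p)+v(q)$. The claim is that $v(p)=2^n$ for every round polynomial term $p$ of order $n$.

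\textbf{Base case.} For $n=1$, a round polynomial term of order $1$ has the form $x\cdot y$. It therefore has exactly $2=2^1$ occurrences of variables, even when $x$ and $y$ coincide.

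\textbf{Inductive step.} Assume that every round polynomial term of order $n\geq 1$ has $2^n$ variable occurrences. We show by a second induction, on the number of rule applications used to build a term, that every round polynomial term of order $n+1$ has $2^{n+1}$ occurrences. A round term of order $n+1\geq 2$ arises from one of the two rules.
\begin{enumerate}
\item By rule (2), it is $p\cdot q$ with $p,q$ round of order $n$. Then $v(p\cdot q)=2^n+2^n=2^{n+1}$ by the outer hypothesis.
\item By rule (1), it is $c\cdot p$ or $p\cdot c$ with $p$ round of order $n+1$ and built in fewer steps. Then $v=0+v(p)=2^{n+1}$ by the inner hypothesis.
\end{enumerate}

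\textbf{Expected obstacle.} The only delicate point is that Definition~\ref{d:round_pol} is an inductive generation and not a unique-readability statement. A given term might, a priori, be obtainable in several ways or with conflicting orders. To avoid this issue, we run the induction on derivations rather than on terms. That is, we prove that every derivation producing a round term of order $n$ yields a term with $2^n$ variable occurrences. This is enough for the proposition, and no uniqueness of the order is needed.
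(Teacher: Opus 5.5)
Your proof is correct and follows the same route as the paper: induction on the order, where rule (2) doubles the number of variable occurrences and rule (1) leaves it unchanged. Your explicit inner induction on derivations makes rigorous the paper's phrase ``followed by a finite number (possibly zero) of applications of rule (1)'' and handles $n=2$ within the general step; the paper instead treats $n=2$ as a separate base case, citing the inaccurate claim that $(xy)(zt)$ is the only round term of order $2$, even though rule (1) yields others such as $c\cdot((xy)(zt))$.
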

\begin{proof}
    The statement is clearly true for $n=1$. It is easily seen that $(xy)(zt)$ is the only round polynomial term of order $2$, so the statement also holds for $n=2$. For $n\geq 3$, suppose that any round polynomial term of order $n-1$ contains $2^{n-1}$ variables. Now, note that a round polynomial term of order $n$ is obtained by combining two round polynomial terms of order $n-1$ using rule (2), followed by a finite number (possibly zero) of applications of rule (1). Since the polynomial terms of order $n-1$ contain $2^{n-1}$ variables and rule (1) does not change the number of variables, the resulting polynomial term contains $2^{n}$ variables.
\end{proof}

\begin{pro}\label{p:qRoundPol} Let $q_1(x_1,x_2)=x_1\cdot x_2$, and for $k\geq 2$
$$q_k\tup{x}{1}{2^k}=q_{k-1}\tup{x}{1}{2^{k-1}}\cdot q_{k-1}\tup{x}{2^{k-1}+1}{2^k}.$$
Then, for $k\geq 1$, $q_k\tup{x}{1}{2^k}$ contains $2^k-1$ instances of $\cdot$ and is the smallest round polynomial term of order $k$ (in terms of the number of instances of $\cdot$), and we have $q_k(G^{2^k})\subseteq S_{k+1}G$.
\end{pro}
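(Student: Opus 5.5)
The statement has three parts, and I will prove all of them by induction on $k$, following the recursive definition of $q_k$.

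\emph{Roundness and count of $\cdot$.} For $k=1$, $q_1=x_1\cdot x_2$ is round of order $1$ by Definition \ref{d:round_pol} and has one instance of $\cdot$. For $k\geq 2$, $q_k$ is the product of two copies of $q_{k-1}$ on disjoint variable blocks. By the inductive hypothesis each copy is round of order $k-1$, so rule (2) makes $q_k$ round of order $k$. Counting occurrences of $\cdot$ gives $2(2^{k-1}-1)+1=2^k-1$.

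\emph{Minimality.} I will show by induction on $k$ that every round polynomial term of order $k$ has at least $2^k-1$ instances of $\cdot$. Roundness is generated by rules (1) and (2), so I argue on the last rule applied.
\begin{itemize}
\item If the last rule is (1), the term is $c\cdot p$ or $p\cdot c$ with $p$ round of order $k$. Then $p$ has strictly fewer symbols, so by an inner induction on term size it already has at least $2^k-1$ occurrences of $\cdot$. Adding $c$ only increases the count.
\item If the last rule is (2), the term is $p\cdot q$ with $p,q$ round of order $k-1$. By the inductive hypothesis on $k$, the count is at least $2(2^{k-1}-1)+1=2^k-1$.
\end{itemize}
The base case is order $1$, where the only round term is $x\cdot y$, which has exactly one $\cdot$. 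The main care needed is keeping the two inductions (on order and on term size) properly nested. This is routine but should be stated explicitly.

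\emph{The inclusion $q_k(G^{2^k})\subseteq S_{k+1}G$.}
\begin{itemize}
\item For $k=1$: $q_1(G^2)=G\cdot G=G^2=S_2G$ by Definition \ref{d:S}.
\item For $k\geq 2$: assume $q_{k-1}(G^{2^{k-1}})\subseteq S_kG$. Any value $q_k(\mathbf{a})$ has the form $u\cdot v$ with $u,v\in q_{k-1}(G^{2^{k-1}})\subseteq S_kG$. Hence $u\cdot v\in (S_kG)\cdot(S_kG)=C_0((S_kG)\cdot(S_kG))$.
\item Since $k\geq 2$, Definition \ref{d:S} gives $S_{k+1}G=\bigcup_{j\in\N_0}C_j((S_kG)\cdot(S_kG))$, and this union contains $C_0((S_kG)\cdot(S_kG))$.
\end{itemize}
This last step is where the recursion for $S_{k+1}G$ must be applicable: it only holds for $k\geq 2$. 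That is exactly the range of the inductive step, because the case $k=1$ is handled separately through $S_2G=G^2$.
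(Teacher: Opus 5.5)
Your proof is correct and follows the paper's route: induction on $k$. You use the same count $2(2^{k-1}-1)+1$ and the same chain $q_k(G^{2^k})\subseteq (S_kG)\cdot(S_kG)=C_0((S_kG)\cdot(S_kG))\subseteq S_{k+1}G$ for the inclusion. Your nested induction for minimality is actually more careful than the paper's, which claims $q_2$ is the only round term of order $2$ (false, since rule (1) gives e.g.\ $c\cdot((x_1x_2)(x_3x_4))$); your inner induction on term size handles rule (1) correctly.
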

\begin{proof} We prove the statements using induction. The base cases will be $k=1$ and $k=2$. The first two statements are true in both cases, since $q_{1}$ and $q_{2}$ are the only round polynomials of orders $1$ and $2$, respectively. For the last statement, we have $q_{1}(G^{2})=G^{2}=S_{2}G$ and $q_{2}(G^{4})=(G^{2})(G^{2})=C_{0}((S_{2}G)\cdot(S_{2}G))\subseteq S_{3}G$ so the basis is true here as well. Suppose that the statements are true for $k\geq 2$, and consider $q_{k+1}$. Since $q_{k+1}=q_{k}\cdot q_{k}$, it contains $2\cdot (2^k-1)+1=2^{k+1}-1$ instances of $\cdot$ by the inductive hypothesis. It is clearly a round polynomial term of order $k+1$ (obtained by rule (2) in Definition \ref{d:round_pol}), and it is the smallest one of that order, since $q_{k}$ is the smallest round polynomial term of order $k$ and rule (2) is the only way to obtain a round polynomial of order $k+1$ from polynomials of smaller order. For the last statement, note that the inductive hypothesis implies
\begin{gather*}
q_{k+1}(G^{2^{k+1}})=q_{k}(G^{2^{k}})\cdot q_{k}(G^{2^{k}})\\
\subseteq (S_{k+1}G)\cdot(S_{k+1}G)=C_{0}((S_{k+1}G)\cdot(S_{k+1}G))\subseteq S_{k+2}G.\qedhere
\end{gather*}
\end{proof}

In order to abbreviate long clauses, for a groupoid $\bG$ and $n\in\mathbb{N}$, we write $\RoundPol^{*}_{2^{n}}(G)$ for the set of all polynomials from $\Pol^{*}_{2^{n}}(G)$ which correspond to some round polynomial term of order $n$.


\begin{pro}\label{p:roundPol} In a groupoid $\bG$ with zero, for $n\geq 1$ we have
    \begin{equation}\label{e:S&roundpol}
       S_{n+1}G=\{p(G^{2^{n}}):\ p\in\RoundPol^{*}_{2^{n}}(G)\}.
    \end{equation}
\end{pro}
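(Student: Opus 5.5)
We prove \eqref{e:S&roundpol} by induction on $n$, reading the right-hand side as the union of all sets $p(G^{2^n})$. For $n=1$ the only round polynomial term of order $1$ is $x\cdot y$. Since $x\cdot y\in\RoundPol^{*}_{2}(G)$ with distinct variables, the right-hand side is $G\cdot G=G^2=S_2G$. If the two variables coincide, we get $\{gg\}\subseteq G^2$, so the equality is unaffected. For $n=2$ the only round term is $(xy)(zt)$, whose values form $(G^2)(G^2)$. We check directly that $S_3G=\bigcup_k C_k((S_2G)(S_2G))$ agrees with this set. This requires that multiplying an element of $(G^2)(G^2)$ by a constant stays in the set, which holds because the zero and the definition of round terms via rule (1) absorb the constants. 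The main inductive step handles $n\geq 2$ uniformly, so it is cleaner to treat only $n=1$ as the base and verify that the step also covers the passage $1\to 2$.

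Assume \eqref{e:S&roundpol} holds for $n$; we establish it for $n+1$. For $(\supseteq)$, let $p$ be a round term of order $n+1$. By Definition \ref{d:round_pol}, $p$ arises from $q\cdot r$, where $q,r$ are round of order $n$, followed by some number $k$ of applications of rule (1). The variables of $p$ are the disjoint union of those of $q$ and $r$, and they must be distinct for $p$ to lie in $\Pol^{*}_{2^{n+1}}$. By the inductive hypothesis, $q(G^{2^n})$ and $r(G^{2^n})$ are contained in $S_{n+1}G$, so any value of $q\cdot r$ lies in $(S_{n+1}G)\cdot(S_{n+1}G)$. Each application of rule (1) multiplies on the left or right by a constant in $G$, so the values of $p$ lie in $C_k((S_{n+1}G)\cdot(S_{n+1}G))\subseteq S_{n+2}G$. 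If some variables of $p$ repeat, we view $p$ as a specialisation of the term with distinct variables and reach the same conclusion.

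For $(\subseteq)$, we show by induction on $k$ that $C_k((S_{n+1}G)\cdot(S_{n+1}G))$ is contained in the right-hand side. For $k=0$, an element $g_1g_2$ with $g_1,g_2\in S_{n+1}G$ has, by the inductive hypothesis, $g_1=q(\mathbf a)$ and $g_2=r(\mathbf b)$ with $q,r$ round of order $n$. We rename the variables of $r$ to be disjoint from those of $q$. Then $q\cdot r$ is round of order $n+1$, belongs to $\RoundPol^{*}_{2^{n+1}}(G)$, and takes the value $g_1g_2$. For the step $k\to k+1$, an element of $C_{k+1}$ has the form $c\cdot h$ or $h\cdot c$ with $h=p(\mathbf a)$ for a round term $p$ of order $n+1\geq 2$. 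Then $c\cdot p$ or $p\cdot c$ is again round of order $n+1$ by rule (1), with the same variables, and it evaluates to the element.

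The main obstacle is the bookkeeping over variables: $\RoundPol^{*}_{2^{n}}$ requires all $2^n$ variables to be distinct and present, while Definition \ref{d:round_pol} allows coinciding variables. One must check that renaming apart in the $(\subseteq)$ direction and specialising in the $(\supseteq)$ direction are legitimate. A second point is the order-$1$ case in rule (1), which is excluded: $c\cdot(xy)$ is not round. This is consistent with $S_2G=G^2$ being defined without the $C_k$ closure, and the closure of $G^2$ under multiplication by constants follows from $C_1(G^2)\subseteq G^2$, as noted in Lemma \ref{CSnG}.
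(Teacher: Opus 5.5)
Your proof is correct and follows the paper's. It uses induction on $n$, a nested induction on $k$ for $(\subseteq)$ identical to the paper's, and a structural analysis of round terms for $(\supseteq)$; there you unwind the term as one rule-(2) step followed by $k$ applications of rule (1), landing directly in $C_k((S_{n+1}G)\cdot(S_{n+1}G))$, whereas the paper inducts on the number of occurrences of $\cdot$ and uses Lemma~\ref{CSnG} for the rule-(1) case. Discard your $n=2$ aside, which is false as written: rule (1) already applies at order $2$ (e.g.\ $a\cdot((x_1x_2)(x_3x_4))$), and $S_3G$ can be strictly larger than $(G^2)(G^2)$ (take $G=\{0,a,b,c,e\}$ with $aa=b$, $bb=c$, $ac=e$ and all other products $0$, so $(G^2)(G^2)=\{0,c\}$ while $e\in S_3G$); the argument stays sound only because your actual base case is $n=1$.
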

\begin{proof} We will prove the statement using mathematical induction on $n$. For $n=1$, from Definition \ref{d:S} we have $S_{2}G=G^{2}$. By Definition \ref{d:round_pol}, any round polynomial term of order $1$ is of the form $x\cdot y$, so we have
$$\{p(G^{2}):\ p\in\RoundPol^{*}_{2}(G) \}=G^{2}.$$
Hence, \eqref{e:S&roundpol} is true for $n=1$.
Let $n\geq 1$ and suppose that \eqref{e:S&roundpol} holds for $S_{n}G$. We want to prove the same for $S_{n+1}G$.

$(\subseteq)$ From Definition \ref{d:S}, we have $S_{n+1}G=\bigcup_{k\in\mathbb{N}}C_{k}((S_{n}G)(S_{n}G))$. Using a 'nested' induction (on $k$) we show that, for all $k\in\mathbb{N}_0$,
\begin{equation}\label{e:k-ind}
\begin{aligned}
C_{k}((S_{n}G)(S_{n}G))\subseteq\{p(G^{2^{n}}):\ p\in\RoundPol^{*}_{2^{n}}(G)\}.
\end{aligned}
\end{equation}
To distinguish between the two, we will call them the $n$-induction and the $k$-induction.
For $k=0$, let $g\in(S_{n}G)(S_{n}G)$. By the inductive hypothesis of the $n$-induction, there exist round polynomial terms $q(x_{1},\ldots,x_{2^{n-1}})$ and $r(y_{1},\ldots,y_{2^{n-1}})$ of order $n-1$ and elements $g_{1},\ldots,g_{2^{n-1}},h_{1},\ldots,h_{2^{n-1}}\in G$ such that $g=q(g_{1},\ldots,g_{2^{n-1}})\cdot r(h_{1},\ldots,h_{2^{n-1}})$. Now,  $q\cdot r$ is a round polynomial term of order $n$, obtained by rule (2) in Definition \ref{d:round_pol}. Therefore, $g$ belongs to the right-hand side of \eqref{e:k-ind}. Hence, the basis of the $k$-induction holds. Now, we suppose that the statement of the $k$-induction is true for some $k\in\N_0$, and we want to prove it for $k+1$. By Proposition \ref{inductiveC} and the inductive hypothesis for the $k$-induction, we have
$$
\begin{aligned}
C_{k+1}((S_{n}G)(S_{n}G))&=C_1(C_k((S_{n}G)(S_{n}G)))\\
&\subseteq C_1(\{p(G^{2^{n}}):\ p\in\RoundPol^{*}_{2^{n}}(G)\}).
\end{aligned}
$$
Note that, for any $a\in G$ and any round polynomial term $p\tup{x}{1}{2^n}$ of order $n$, the polynomial terms $a\cdot p\tup{x}{1}{2^n}, p\tup{x}{1}{2^n}\cdot a$ are also round polynomial terms of order $n$ (by rule (1) in Definition \ref{d:round_pol}) whose corresponding polynomials belong to $\RoundPol^{*}_{2^{n}}(G)$. Hence, we have completed the $k$-induction.

$(\supseteq)$ We will use an induction on the number $m\geq3$ of instances of the symbol $\cdot$ in the round polynomial term of $p\in\RoundPol^{*}_{2^{n}}(G)$. This will be the $m$-induction. By Proposition \ref{p:qRoundPol}, the term $q_{k}$ is the simplest round polynomial term of order $k\geq 1$ in terms of the number of instances of the symbol $\cdot$.  Therefore, for the basis of the $m$-induction we let $m=2^{n}-1$ and consider $q_n\in\RoundPol^{*}_{2^{n}}(G)$. From Proposition \ref{p:qRoundPol} we have $q_n(G^{2^n})\subseteq S_{n+1}G$, so the basis of the $m$-induction is true. Now, suppose that the inclusion $(\supseteq)$ in \eqref{e:S&roundpol} is true for all round polynomial terms of order $n$ having at most $m$ instances of $\cdot$ (where $m\geq 2^{n}-1$). We will prove that the same inclusion holds for a round polynomial term $p$ of order $n$ whose polynomial term has $m+1$ instances of $\cdot$. By Definition \ref{d:round_pol}, we have the following two cases. If $p$ was obtained by rule (2), we have $p(x_1,\dots,x_{2^{n-1}},y_1,\dots,y_{2^{n-1}})=q(x_1,\dots,x_{2^{n-1}})r(y_1,\dots,y_{2^{n-1}})$ for round polynomial terms $q,r$ of order $n-1$ with at most $m$ occurrences of the symbol $\cdot$. Therefore, for any $g_1,\dots,g_{2^n}\in G$ we have $q\tup{g}{1}{2^{n-1}},r\tup{g}{2^{n-1}+1}{2^n}\in S_{n}G$ by the inductive hypothesis of the $m$-induction. Hence, $p\tup{g}{1}{2^n}\in(S_nG)(S_nG)\subseteq S_{n+1}G$. If $p$ was obtained by rule (1), we have $p\tup{x}{1}{2^n}=c\cdot q\tup{x}{1}{2^n}$ (the case $p\tup{x}{1}{2^n}=q\tup{x}{1}{2^n}\cdot c$ is analogous) for a $c\in G$ and a round polynomial term $q\tup{x}{1}{2^n}$ of order $n$ with $m-1$ symbols $\cdot$. Then, for $g_1,\dots,g_{2^n}\in G$, we have $p\tup{g}{1}{2^n}=c\cdot q\tup{g}{1}{2^n}\in cS_{n+1}G$ by the inductive hypothesis of the $m$-induction. By Proposition \ref{idealsandpolynomials} and Lemma \ref{CSnG}, we have $cS_{n+1}G\subseteq GS_{n+1}G\subseteq C_1(S_{n+1}G)\subseteq S_{n+1}G$.
\end{proof}

Thus, directly from Propositions \ref{p:nsolGroupoid} and \ref{p:roundPol} and Definition \ref{vanishing} we obtain a more workable criterion for $n$-solvability of a groupoid with zero.
\begin{thm}\label{t:nsolGroupoid}
    Let $\bG$ be a groupoid with a zero, and let $n\geq 2$. Then $\bG$ is $n$-solvable if and only if all polynomials from $\RoundPol^{*}_{2^{n}}(G)$ are vanishing polynomials.
\end{thm}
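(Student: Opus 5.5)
The plan is to chain the two preceding results. By Proposition \ref{p:nsolGroupoid}, $\bG$ is $n$-solvable if and only if $S_{n+1}G=\{0\}$. By Proposition \ref{p:roundPol}, $S_{n+1}G$ is exactly the union of the images $p(G^{2^n})$ as $p$ ranges over $\RoundPol^{*}_{2^{n}}(G)$. So $n$-solvability is equivalent to $p(G^{2^n})\subseteq\{0\}$ for every such $p$.

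First I will check that these images are nonempty, so that the inclusion is an equality. Since $G$ is nonempty, each image $p(G^{2^n})$ is nonempty. Moreover $0$ lies in every such image: each $p$ lies in $\Pol^{*}_{2^n}(G)$, so substituting $0$ for any variable gives $0$ by Proposition \ref{polynomialzero}. Hence $S_{n+1}G=\{0\}$ holds if and only if $p(G^{2^n})=\{0\}$ for all $p\in\RoundPol^{*}_{2^{n}}(G)$.

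By Definition \ref{vanishing}, the condition $p(G^{2^n})=\{0\}$ says precisely that $p$ is a vanishing polynomial. This yields both directions simultaneously.

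Nothing here is hard; the only care point is the hypothesis $n\geq 2$. It ensures that both cited propositions apply with index $n$ (each needs only $n\ge1$). The real work has already been done in Proposition \ref{p:roundPol}.
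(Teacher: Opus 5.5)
Your proposal is correct and is the paper's own argument: the theorem follows directly by combining Proposition \ref{p:nsolGroupoid} with Proposition \ref{p:roundPol} and Definition \ref{vanishing}. Your extra check that $0$ lies in every image $p(G^{2^n})$ (via Proposition \ref{polynomialzero}) makes explicit a step the paper leaves implicit, namely passing between ``the union equals $\{0\}$'' and ``each image equals $\{0\}$''.
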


\begin{exa}
\label{ex:2sol}
    We will show that, for $n\geq 2$, $n$-solvability implies neither $n$-supernilpotency nor $n$-nilpotency. For a fixed $n\geq 2$, consider the groupoid $\bG_{n}=(\{0,a_{1},\ldots,a_{2n+1}\},\cdot)$ whose operation is defined by the following rules: for $x,y\in G_{n}$ we have $x\cdot y=0$, except in the following cases:
    \begin{equation*}
        a_{1}\cdot a_{2}=a_{3}, \text{\ \ and\ \ } \\
        a_{2k+1}\cdot a_{2k+2}=a_{2k+3} \text{ for } k=1,\ldots,n-1.
    \end{equation*}
    The groupoid $\bG_{2}$ is shown in Table \ref{tab:2solTab}.
    Continuing with the general case, we have $(\cdots((a_{1}\cdot a_{2}) a_{4})\cdots ) a_{2n}=a_{2n+1}$. Since we have a non-zero product of $n+1$ elements, Proposition \ref{p:supernil} implies that $G_{n}$ is not $n$-supernilpotent. Furthermore, the depth of the polynomial term $(\cdots((x_{1}\cdot x_{2}) x_{4})\cdots ) x_{2n}$ is $n$, so Theorem \ref{t:n-nilpot} implies that our groupoid is not n-nilpotent. However, since the product of any two elements in $G_{n}$ belongs to $\{0\}\cup\{a_{2k+1}:k=1,\ldots,n\}$ and we have $x\cdot a_{2k+1}=0$ for any $x\in G_{n}$ and any $k=1,\ldots n$, the polynomial $(x_{1}\cdot x_{2})\cdot(x_{3}\cdot x_{4})$ has a single possible value $0$. From  
    Definition \ref{d:round_pol}, it follows that this polynomial term is a subterm of any round polynomial term of order $n\geq 2$. Thus, Proposition \ref{polynomialzero} and Theorem \ref{t:nsolGroupoid} imply that $G_{n}$ is $n$-solvable for $n\geq 2$.
    \begin{table}
    \centering
    \begin{tabular}{c|cccccc}
         $\cdot$ & $0$ & $a_1$ & $a_2$ & $a_3$ & $a_4$ & $a_5$ \\
         \hline
         $0$ & $0$ & $0$ & $0$ & $0$ & $0$ & $0$ \\
         $a_1$ & $0$ & $0$ & $a_{3}$ & $0$ & $0$ & $0$ \\
         $a_2$ & $0$ & $0$ & $0$ & $0$ & $0$ & $0$ \\
         $a_3$ & $0$ & $0$ & $0$ & $0$ & $a_5$ & $0$ \\
         $a_4$ & $0$ & $0$ & $0$ & $0$ & $0$ & $0$ \\
         $a_5$ & $0$ & $0$ & $0$ & $0$ & $0$ & $0$ \\
    \end{tabular}
    \caption{$G_2$ in Example \ref{ex:2sol}}
    \label{tab:2solTab}
\end{table}
\end{exa}

\subsection*{Acknowledgement}

\section{Declarations}

\subsection*{Ethical approval}
Not applicable.

\subsection*{Competing interests}
Not applicable.

\subsection*{Authors' contributions}
All authors contributed equally.

\subsection*{Availability of data and materials}
Not applicable.

\subsection*{Funding}
The research of both authors was supported by the Ministry of Science, Education and Technological Development of the Republic of Serbia. For the second author, through Grants No. 451-03-33/2026-03/200125 and 451-03-34/2026-03/200125.

\end{document}